\def\titlerunning#1{\gdef\titrun{#1}}
\def\author#1{\gdef\autrun{\def\and{\unskip, }#1}\gdef\@author{#1}}
\def\address#1{{\def\and{\\\hspace*{18pt}}\renewcommand{\thefootnote}{}%
\footnote {#1}}%
\markboth{\autrun}{\titrun}}
\def\email#1{e-mail: #1}
\def\subjclass#1{{\renewcommand{\thefootnote}{}%
\footnote{\emph{Mathematics Subject Classification (2010):} #1}}}
\def\keywords#1{\par\medskip
\noindent\textbf{Keywords.} #1}
\newtheorem{Theorem}{Theorem}[section]
\newtheorem{Definition}[Theorem]{Definition}
\newtheorem{Proposition}[Theorem]{Proposition}
\newtheorem{Lemma}[Theorem]{Lemma}
\newtheorem{Remark}[Theorem]{Remark}
\newtheorem{Example}[Theorem]{Example}
\numberwithin{equation}{section}
\def\D{\mathbb D}
\def\R{\mathbb R}
\def\N{\mathbb N}
\def\m{\mathbf{m}}
\def\d{\text{\rm{d}}}
\def\ff{\frac}
\def\ll{\lambda}
\def\DD{\Delta}
\def\nn{\nabla}
\def\<{\langle}
\def\>{\rangle}
\newcommand{\esssup}{\operatorname{ess\,sup}}
\renewcommand{\thefootnote}{\,(\arabic{footnote})}
\newcommand{\zx}{\color{black}}
\newcommand{\yx}{\color{black}}
\newcommand{\xz}{\color{black}}
\newcommand{\xy}{\color{black}}
\begin{document}


\baselineskip=17pt


\titlerunning{Strong dissipativity of generalized fractional derivatives and quasi-linear (S)PDE}

\title{Strong dissipativity of generalized time-fractional derivatives and quasi-linear (stochastic) partial differential equations}

\author{Wei Liu
\and
Michael R\"{o}ckner
\and
Jos\'{e} Lu\'{i}s da Silva}
\date{}
\maketitle

\address{W. Liu: School of Mathematics and Statistics, Jiangsu Normal University, 221116 Xuzhou, China; \email{weiliu@jsnu.edu.cn} 
 \and
M. R\"{o}ckner: Faculty of Mathematics, Bielefeld University, 33615 Bielefeld, Germany / Academy of Mathematics and Systems Science, CAS, 100190 Beijing, China; \email{roeckner@math.uni-bielefeld.de} 
\and
 J.L. da Silva: CIMA, University of Madeira, 9020-105 Funchal, Portugal; \email{joses@staff.uma.pt}}

\subjclass{Primary 35R11, 60H15, 35K59;  Secondary 76S05,  26A33, 45K05,  35K92}

\begin{abstract}
In this paper strong dissipativity of generalized time-fractional derivatives on Gelfand triples of properly in time weighted $L^p$-path spaces is proved.
In particular, as special cases the classical Caputo derivative and other fractional derivatives appearing in applications  are included.
As a consequence one obtains the existence and uniqueness of solutions to evolution equations on Gelfand triples with generalized time-fractional derivatives.
These equations are of type
\begin{equation*}
 \frac{d}{dt} (k * u)(t) + A(t, u(t)) = f(t), \quad 0<t<T,
\end{equation*}
with (in general nonlinear) operators $A(t,\cdot)$ satisfying general weak monotonicity conditions.
Here $k$ is a non-increasing locally Lebesgue-integrable nonnegative function on $[0, \infty)$ with $\underset{s\rightarrow\infty}{\lim}k(s)=0$.
Analogous results for the case, where $f$ is replaced by a time-fractional additive noise, are obtained as well.
Applications include generalized time-fractional quasi-linear (stochastic) partial differential equations.
In particular, time-fractional (stochastic) porous medium  and fast diffusion  equations with ordinary or fractional Laplace operators and the time-fractional (stochastic) $p$-Laplace equation are covered.
\keywords{generalized time-fractional derivative; strong dissipativity; weak monotonicity; generalized porous medium equation; $p$-Laplace equation}
\end{abstract}

\section{Introduction}\label{section:1}
In this paper (see Theorem \ref{thm:2.2} below) we prove existence and uniqueness of solutions to non-local in time evolution equations of type
\begin{align}\label{eq:1.1}
	\partial^{*k}_t (u-u_0) + A(t, u(t)) = f(t), \quad 0<t<T,
\end{align}
on a separable real Hilbert space ($H,\langle \cdot,\cdot \rangle_H)$, which is the pivôt space of a Gelfand triple
\begin{equation}\label{eq1.1'}
V\subseteq H (\cong H^*) \subseteq V^*,
\end{equation}
where $V$ is a reflexive Banach space with dual $V^*$.
Here $T \in (0,\infty)$, $u_0$ is the initial condition and
\begin{align*}
	A(t,\cdot) \colon V \longrightarrow V^*, \quad t \in [0,\infty),
\end{align*}
are (in general nonlinear) weakly-monotone operators satisfying \ref{cond:H1}--\ref{cond:H4} in Section \ref{section:2} below.
Furthermore, $f(t) \in V^*$, $t \in [0,\infty)$, and
\begin{align}\label{eq:1.2}
 \partial^{*k}_t u : = \partial_t(k * u):= \frac{d}{dt} \int^t_0 k(t-s)u(s) \, \mathrm{d}s, \quad t \in [0, \infty),
\end{align}
for $k\in L^1_{\mathrm{loc}} ([0, \infty))$, $k\geq 0$, non-increasing and without loss of generality right-continuous. Here we also refer to \eqref{eq:2.2} below, which is the integral form of \eqref{eq:1.1} and follows from \eqref{eq:1.1} under an additional assumption on $k$ (see condition \ref{cond:k2} in Section \ref{section:2} below).

In \cite{LRS18} \zx under more stringent conditions on A existence of solutions has been proved in \xz the special case \yx where\xy
\begin{align}\label{eq:1.3}
 k(t): = g_{1-\beta} (t):=\frac{t^{-\beta}}{\Gamma(1-\beta)}, \quad t \in [0, \infty), \beta \in (0,1),
\end{align}
i.e.,~where $\partial^{*k}_t (u-u_0)$ is the Caputo time-fractional derivative of $u$, has been treated.
For more examples of functions $k$, also called \textit{kernels} in the literature, we refer to Section \ref{section:6}.

In \cite{LRS18}, however, the stronger hypothesis that $A(t,\cdot) \colon V\longrightarrow V^*$, $t\in [0,\infty)$, is monotone (that is, $C_1=0$ in \ref{cond:H2}, see Section \ref{section:2}), was assumed, which excludes a number of important applications.
Apart from this and the more general non-local time derivatives $\partial^{*k}_t$, which for distinction we call \textit{generalized} time-fractional derivatives, in this paper we give a new and easy proof of uniqueness of solutions to \eqref{eq:1.1}.
The proofs both for generalizing to weakly-monotone $A(t,\cdot)$, $t\in [0,\infty)$, and for uniqueness turn out to be consequences of a new result on (generalized) time-fractional derivatives \yx in this paper\xy. \yx This is that, we identify \xy$\yx-\xy\partial^{*k}_t$ as a generator of a $C_0$-operator semigroup  on a properly in time weighted $L^2$-space \yx and prove that it \xy is strongly dissipative \yx(see Proposition \ref{eq:3.1} and Lemma \ref{eq:3.4'} below, as well as their consequence Theorem \ref{thm:2.1}). This \xy together with its applications to uniquely solving \eqref{eq:1.1} (see Theorem \ref{thm:2.2}  and Section \ref{section:4} below) can be considered as the main contribution of this work.
In particular, our results are applicable to the time-fractional generalized porous medium  and fast diffusion equations with ordinary or fractional Laplace operators
\begin{equation*}
\partial_t^{*k} ( u(t) -u_0) +(-\Delta)^\alpha (| u(t) |^{r-1} u(t) )= f(t, u(t) )\end{equation*}
and the time-fractional $p$-Laplace equation
\begin{equation*}
 \partial_t^{*k} ( u(t) -u_0) -  {\rm div}\left( |\nabla  u(t) |^{p-2} \nabla  u(t)  \right) = f(t, u(t)).
\end{equation*}
We refer to Section \ref{section:7} for details and more general types of these equations, which our results apply to \zx and which are not covered by results in the literature.\xz

\yx As \xy a consequence by a simple shift argument we obtain the unique solvability of the stochastically perturbed variant of \eqref{eq:1.1}, namely
\begin{align}\label{eq:1.4}
 \partial^{*k_1}_t (X(t)-X_0) + A(t, X(t))= \partial^{*k_2}_t \int^t_0 B(s) \, \mathrm{d}W(s), \quad 0<t<T,
\end{align}
where $W(t)$, $t\geq 0$, is a cylindrical Brownian motion in some other separable Hilbert space $(U, {\langle\cdot, \cdot \rangle}_U)$ and $B(s) \colon U \longrightarrow H$ is a Hilbert--Schmidt operator for every $s\in [0, \infty)$ (see Theorem \ref{thm:2.3} below).

At this point we would like to stress that, since the operator $A$ is allowed to be nonlinear as e.g.~a quasi-linear partial or pseudo differential operator (see Section \ref{section:7} below for examples), the classical probabilistic ``inverse subordination method'' (see \cite{BM01,BMN09,MNV09,OB09} and also \cite{Che17,CKKW} as well as the references therein) to solve equation \eqref{eq:1.1} does not work.
\zx

Let us now explain our method of proof in more detail and in comparison with the usual method in papers on time-fractional differential equations by other authors. The first main point is that we do not solve (as is commonly done in the literature) the integral equation corresponding to \eqref{eq:1.1}, that is, \eqref{eq:2.2} below. This, by the way, would require an additional condition on $k$ (see Theorem \ref{thm:2.2}(ii)).
Instead, we solve equation \eqref{eq:1.1} directly. The reason is that for \eqref{eq:2.2} we cannot exploit the weak monotonicity and coercivity assumptions, (H2), (H3) respectively, on $A$, because of the convolution integral on the right hand side of \eqref{eq:2.2}. Therefore, the idea to find a solution to \eqref{eq:1.1} is to show that the map on its left hand side, considered as a map from paths to paths, is surjective from $\mathcal{V}$ to $\mathcal{V}^*$ in a suitable Gelfand triple $\mathcal{V}\subset \mathcal{H}\subset \mathcal{V}^*$ of $L^p$-path spaces, (see \eqref{eq:2.33} below).
It follows by the assumptions (H1)-(H4) and assuming $C_1=0$ in (H2), that, if $\mathcal{A}$ denotes the map on paths given by $A$ (see \eqref{eq:2.4'}), then $\mathcal{A}$ alone has this surjectivity property, because under these conditions $\mathcal{A}\colon \mathcal{V}\rightarrow \mathcal{V}^*$ is maximal monotone and coercive. But it is a highly non-trivial question, whether then also the sum $\mathcal{A}+\partial_t^{*k}$ is surjective onto $\mathcal{V}^*$. To prove the latter we prove that $\partial_t^{*k}$ is the infinitesimal generator of a (linear) $C_0$-semigroup $(U_t^k)_{t\geq 0}$ on the pivot space $\mathcal{H}=L^2([0,\infty);H)$ of the Gelfand triple \eqref{eq:2.33} (see Proposition \ref{proposition:3.1} and \ref{corollary:3.2}), which is given explicitly by \eqref{eq:2.4}.
Here it is crucial to take the whole time interval $[0,\infty)$ in the definition of $\mathcal{H}$ rather than just $[0,T]$, in contrast to what one would expect, because one wants to solve \eqref{eq:1.1} only for $0<t<T$. Since the restriction of $(U_t^k)_{t\geq0}$ to $\mathcal{V}$ is again a $C_0$-semigroup, by a non-standard (see Remark \ref{remark:a.2}(iii)) perturbation result (see Theorem \ref{eq:4.1}), we can conclude that on some specific domain $\mathcal{F}^k$ (=generalized time-fractional Sobolev space) we have $\mathcal{A}+\partial_t^{*k}\colon \mathcal{F}^k\subset \mathcal{V}\rightarrow \mathcal{V}^*$ is surjective.
For this, however, we need that $A$ is monotone (i.e. $C_1$ in (H2) must be zero). To reduce our case (i.e. $C_1\geq0$) to this case the strict dissipativity of $\,-\partial_t^{*k}$ on the time weighted Gelfand triple $\mathcal{V}_\gamma\subset \mathcal{H}_\gamma\subset \mathcal{V}_\gamma^*$ (see \eqref{eq:2.8} below), where $dt$ is replaced by $e^{-\gamma t}dt$, $\gamma>0$, proved in this paper (with explicit dissipativity constant $\psi_k(\gamma)$, where $\psi_k$ is the Bernstein function with Levy measure $M^k$, whose distribution function is $k$; see Lemma \ref{eq:3.4'} and \eqref{eq:2.9}), becomes crucial.
As another consequence of the strict dissipativity of $\,-\partial_t^{*k}$ we get uniqueness of solutions to \eqref{eq:1.1} in a very easy and standard way (see the end of the proof of Theorem \ref{thm:2.2}(i) in Section 4). To the best of our knowledge this proof is completely new in the case of generalized time-fractional derivatives, as is the result that the latter are all strictly dissipative on appropriately time weighted Gelfand triples as above.
In our paper \cite{LRS18} on the special case, where in \eqref{eq:1.1} $\partial_t^{*k}$ is the classical Caputo derivative $\partial_t^\beta$, $\beta\in(0,1)$, we also proved existence of solutions to \eqref{eq:1.1} (and not to its corresponding integral version \eqref{eq:2.2}) by showing the surjectivity of the map on its right hand side, but as mentioned above, under more stringent conditions on $A$. There, however, we could not prove uniqueness by this approach because of the lack of strict dissipativity of the Caputo derivative, which we only have now as a special case of one of the main results in this paper.
\xz

\zx Next we would like to make some historical remarks, explain the motivation to study equations as \eqref{eq:1.1} and comment on the relation of our results with those in the literature.

Fractional calculus has a long history. Its origins can be traced back to the end of the seventeenth century (cf.~\cite{R77}), \zx and it has been experiencing an impressive revival in
\xz the last few decades. One of the main reasons is that scientists  and engineers
have established a vast amount of new models (e.g. to describe anomalous diffusions) that naturally involve time-fractional differential equations, which have been applied successfully, e.g.~in mechanics (cf.~\cite{M10}), bio-chemistry (cf.~\cite{DE88, F80}), electrical
engineering (cf.~\cite{Di10}), medical science (cf.~\cite{CRA08}). For more applications and references we refer to \cite{BDS,He11,MS12,MBK99,MK00,T20}.\xz

There is \yx a lot of \xy motivation from \zx both \xz Physics and Mathematics as regards
the use of \textit{\yx generalized\xy} time-fractional derivatives (see e.g.\cite{ACV16,KKL17,MBK99,MK00,MS12}). Here we mention
a few examples. Starting from the seminal paper \cite{Caputo1967}
the Caputo fractional derivative was introduced to properly handle
initial value problems, namely to model waves in viscoelastic media.
Later on it was generalized to the so called distributed order derivative
(also called variable order derivative in \cite{Lorenzo2002}), see
\cite{Caputo1995} and Example \ref{exa:distr-order-deriv} below
for details. Other successful applications of the distributed order
derivative \yx include \xy the kinetic theory (cf.~\cite{CGSG2003,Chechkin2003,Kochubei2008,Kochubei09})
to describe ultra-slow diffusion or the theory of elasticity (see
\cite{Lorenzo2002}) for the description of rheological properties
of composite materials. Inverse stable subordinators arise (cf.~\cite{Meerschaert2002,MS2004})
as scaling limits of continuous time random walks. In \cite{Meerschaert2006}
it was shown that under certain technical conditions the probability
density of the hitting time process $E(t)$ (that is the inverse of
a certain subordinator) solve a distributed order time-fractional
evolution equation. For more applications of the distributed order derivative
we refer the reader to \cite{Atanackovic2009,Caputo2001,Gorenflo2006,Kochubei2008a,Mainardi:2007uq,Mainardi:2008fk}.

When dealing with a particular anomalous diffusion process, it is
often difficult to choose which model of the time-fractional diffusion
equations is suitable for its mathematical description. Thus a general
framework of time-fractional derivatives is needed. In \cite{Kochubei11},
the author introduced a general fractional calculus for integral
 operators of convolution type with an arbitrary nonnegative locally
integrable kernel $k$. He considered the initial value problem for
both relaxation and diffusion equations with these general time-fractional
derivatives. Since then many authors applied the generalized time-fractional
derivative to solve in general linear fractional equations and nonlinear
differential equations, see e.g. \cite{Luchko2016,Zacher2015,LRS18}
and references therein. We want to remark that  a  huge amount of  the  existing literature  on  this subject  concentrates on  the case of linear and semilinear type equations. However, to the best of our knowledge, there are only very few  results  that  are  applicable to the quasilinear case, to which the results in this paper have their main new applications.

We should mention that  time-fractional linear evolution equations in the Gelfand triple setting have first been investigated in \cite{Za09}. Later on the author
also proved the global solvability of a nondegenerate parabolic equation with time-fractional derivative in \cite{Za12} (cf. \cite{ACV16} for more general cases).
However, these results cannot be applied to
quasilinear type equations like the porous medium or the $p$-Laplace equation. In \cite{JP04} the authors investigate
elliptic-parabolic integro-differential equations with
$L^1$-data. Their framework includes the time-fractional $p$-Laplace equation. However, the authors in \cite{JP04} only obtain generalized solutions
($i.e.$ entropy solutions). Therefore, the results of the current paper generalize or complement  the corresponding results in \cite{JP04,LRS18,Zacher2015,Za09,Za12}
within the \yx general \xy setting of time-fractional quasilinear PDE\zx s \xz with weakly monotone coefficients. In particular,
the authors in \cite{Zacher2015} derive very interesting decay estimates for the solutions of
time-fractional porous medium and $p$-Laplace equations (by assuming the existence of solutions), and
the decay behaviour is notably different from the case with usual time derivative. In \cite{LRS18}, we
give a positive answer to the  question on the existence and
uniqueness of solutions to the time-fractional porous medium equations and $p$-Laplace equations, which are left open in \cite{Zacher2015}. The current work further extend the results in \cite{LRS18} to  both generalized fractional derivative and the weakly monotone case.

Recently, there has been also growing interest in  time-fractional \textit{\yx stochastic \xy} partial differential equations.
For instance, the authors in \cite{CKK,KKL19} investigate the $L^2$-theory and Sobolev space theory respectively for a class of semilinear SPDEs with time-fractional
 derivatives, which can be used to describe
random effects on transport of particles in media with thermal memory, or particles subject to sticking and
trapping.
In  \cite{MN17,MN15}, the authors consider a space-time fractional stochastic heat type equation to model phenomena with random
effects with thermal memory, and they prove the existence and uniqueness of mild solutions as well as some intermittency property.
For a linear stochastic partial differential equation of fractional order both in the time and space variables with a different type of noise term,  we refer to \cite{CHHH} (see also \cite{AX17,DL13}).
In \cite{C17} the authors investigate linear stochastic time-fractional partial differential equations for the type of heat equation and wave equation.

The list of references quoted above is far from being complete, but show the enormous interest in the subject. \yx However none of them contains results on quasi-linear SPDEs with fractional or generalized fractional time derivative, whereas these form a class of equations to which the results of the present paper apply. \xy

The rest of the paper is organized as follows. In Section 2 we present the main results (Theorems \ref{thm:2.1}, \ref{thm:2.2} and \ref{thm:2.3}) on the existence and uniqueness of solutions to
deterministic and stochastic nonlinear evolution equations with generalized time-fractional derivatives.
Theorem \ref{thm:2.1} will be proved in Section \ref{section:3}.
The proof of Theorem \ref{thm:2.2} is given in Section \ref{section:4}.
It relies on Theorem \ref{thm:2.1} and an abstract perturbation result (see Theorem \ref{eq:4.1}).
\yx Since this is not standard, for \xy the convenience of the reader we include its proof in the Appendix of this paper.
Because of its importance we give a more detailed proof than the \zx very sketchy one \xz in \cite{LRS18}.
The proof of Theorem \ref{thm:2.3} will be given in Section \ref{section:5}$\,$. Section \ref{section:6} \yx contains \xy examples of kernels $k$ which appeared in literature. In Section \ref{section:7} we apply the main results to some concrete quasi-linear deterministic and stochastic PDE\zx s\xz.

\section{Framework and main results}\label{section:2}
Let  $(H, \<\cdot,\cdot\>_H)$ be a real separable Hilbert space identified with its dual space $H^*$ by the Riesz isomorphism.
Let $V$ be  a real reflexive  Banach space, continuously and densely embedded into $H$.
Then we have the following Gelfand triple
$$V \subseteq H \cong  H^*\subseteq V^*.$$
Let ${ }_{V^*}\<\cdot,\cdot\>_V$ denote the dualization between  $V$ and its dual space $V^*$ and let $\| \cdot \|_{H}$, $\| \cdot \|_{V}$, $\| \cdot \|_{V^*}$ denote the respective norms.
Then it is easy to show that
$$ { }_{V^*}\<u, v\>_V=\<u, v\>_H, \ \  u\in H ,v\in V.$$
Now, for $T \in [0, \infty)$ fixed, we consider the following general nonlinear evolution equation with generalized time-fractional derivative
\begin{align}\label{eq:2.1}
	\partial^{*k}_t (u-u_0) + A(t, u(t)) = f(t),\ \text{for $\mathrm{d}t$-a.e. $t\in[0,T ]$},
\end{align}
where $k \in L^1_{\mathrm{loc}} ([0,\infty);\mathbb{R}, \mathrm{d}s) =: L^1_{\mathrm{loc}} ([0,\infty))$ (with $\mathrm{d}s$ = Lebesgue measure) satisfies condition \ref{cond:k} below, $f\in L^1 ([0,\infty); V^*)$, $\partial^{*k}_t$ is as in \eqref{eq:1.2}, $u_0 \in V$ is the initial condition, and we are seeking for solutions  $u\in L^1([0,\infty);V)$.
Therefore, the derivative $\frac{\mathrm{d}}{\mathrm{d}t}$ in the definition \eqref{eq:1.2} of $\partial^{*k}_t$ is understood in the weak sense.
Consider the following conditions on $k$:
\begin{enumerate}[label=(k), leftmargin=1.5cm]
\item \label{cond:k}  $k \in L^1_{\mathrm{loc}} ([0,\infty))$, $k$ is nonnegative, non-increasing and (hence without loss of generality) right continuous such that $\underset{s\rightarrow\infty}{\lim}k(s)=0$.
\end{enumerate}
\begin{enumerate}[label=($\tilde{\textrm{k}}$),leftmargin=1.5cm]
\item \label{cond:k2} There exists $\tilde{k} \in L^1_{\mathrm{loc}} ([0,\infty))$, nonnegative, such that
	$$(\tilde {k} * k) (t)= \int^t_0 \tilde {k} (t-s) k(s) \, \mathrm{d}s = 1 \qquad \text{for $\mathrm{d}t$-a.e. } t\in  [0,\infty).$$
\end{enumerate}
Here and below we consider $k$ and $\tilde{k}$ as functions on $\R$ defining them to be zero on $(-\infty,0)$.
Obviously \ref{cond:k} and \ref{cond:k2} hold for $k$ as in \eqref{eq:1.3}.

If \ref{cond:k} and \ref{cond:k2} hold, then \eqref{eq:2.1} can be rewritten as
\begin{align}\label{eq:2.2}
  u(t) = u_0 -\int^t_0 \tilde{k}(t-s) A(s,u(s)) \, \mathrm{d}s + \int^t_0 \tilde{k} (t-s) f(s)\, \mathrm{d}s \qquad \text{for } \mathrm{d}t\text{-a.e.}~t \in [0,\infty).
\end{align}
This can be easily seen by first integrating \eqref{eq:2.1} with respect to $\mathrm{d}t$ and using the fact that the convolution with $k * \tilde{k} = \tilde{k} * k$ is just integration with respect to $\mathrm{d}t$.
Defining $\tilde{u} (t)$ to be equal to the right hand side of \eqref{eq:2.2} for \textit{every} $t\in [0,\infty)$, we have that $\tilde{u}$ is a $\mathrm{d}t$-version of $u$, hence still satisfies \eqref{eq:2.2} with $\tilde{u}(0) = u_0$.
In this sense $u$ has $u_0$ as its initial condition.
Now let us specify the conditions on the map
\begin{align*}
	A \colon [0,\infty) \times V \longrightarrow V^*
\end{align*}
 which is first of all assumed to be $\mathcal{B}([0,\infty)\times V)/\mathcal{B}(V^*)$ measurable (where $\mathcal{B} (\cdot)$ means Borel $\sigma$-algebra of $\cdot$) and assumed to satisfy the following:
 There exist $\alpha \in (1,\infty)$, $\delta \in (0,\infty)$, $C_1, C_2 \in [0,\infty)$ and $g\in L^1 ([0,\infty); \mathbb{R})$ such that for all $t\in [0,\infty)$, $v, v_1, v_2\in V$
\begin{enumerate}[label=(H\arabic*), leftmargin=1.5cm]
 \item \label{cond:H1}(Hemicontinuity)
      The map  $ s\mapsto { }_{V^*}\<A(t,v_1+s v_2),v\>_V$ is  continuous on $\mathbb{R}$.

\item \label{cond:H2} (Weak Monotonicity)
     $$  { }_{V^*}\<A(t,v_1)-A(t, v_2), v_1-v_2\>_V \ge -C_1 \| v_1 -v_2\|_H^2. $$

\item \label{cond:H3} (Coercivity)
    $$  { }_{V^*}\<A(t,v), v\>_V  \ge  \delta \|v\|_V^{\alpha} -C_2\|v\|_H^2 - g(t).$$

\item \label{cond:H4} (Growth)
   $$ \|A(t,v)\|_{V^*}^{\frac{\alpha}{\alpha-1}} \le  g(t) + C_2 \left(\|v\|_V^{\alpha}+\|v\|_H^2 \right).$$
\end{enumerate}
We define the following  spaces,
\begin{align}\label{eq:2.33}
  \mathcal{V}&=L^\alpha([0,\infty); V)\cap L^2([0,\infty); H),\notag\\
  \mathcal{H}&=L^2([0,\infty); H),\\
  \mathcal{V}^*&=L^{\frac{\alpha}{\alpha-1}}([0,\infty); V^*)+L^2([0,\infty); H)\notag,
\end{align}
where $\| \cdot \|_{\mathcal V} := \max ( \| \cdot \|_{L^\alpha([0,\infty); V)} , \| \cdot \|_{\mathcal H} )$ and for $u \in \mathcal V^*$
\begin{align*}
\| u \|_{\mathcal{V}^*} := \inf \left\{ \| u_1 \|_{L^{\frac{\alpha}{\alpha-1}}([0,\infty); V^*)} + \| u_2 \|_{\mathcal H}:
u_1 \in L^{\frac{\alpha}{\alpha-1}}([0, \infty); V^*), u_2 \in \mathcal H \ s.t. \ u=u_1+u_2\right\}.
\end{align*}
Then for $u_0=0$ (the case for general initial conditions $u_0 \in V$ will then follow easily as we shall see below)
the original equation \eqref{eq:2.1} can be rewritten in the following form
\begin{equation}\label{eq:2.2'}
 \partial_t^{*k}u+\mathcal{A}u=f,
\end{equation}
where
\begin{equation}\label{eq:2.4'}
	\mathcal{A} \colon \mathcal{V}\longrightarrow \mathcal{V}^*; (\mathcal{A}u)(t)=A(t,u(t)), \ t\in[0,\infty).
\end{equation}
It is easy to see that $\mathcal{A} \colon \mathcal{V}\longrightarrow \mathcal{V}^*$ is weakly monotone, coercive and bounded on bounded sets.
Below we fix $k$ and $A$ as above.

To formulate our main results we furthermore need to define the following ``shift to the right'' semigroup $U_t$, $t > 0$, on $\mathcal{H}$.
Below we extend every $f \in \mathcal{H}$ by $f:=0$ on $(-\infty, 0)$ to a function $f \colon \mathbb{R} \longrightarrow H$.
For $f \in \mathcal{H}$, $t \geq 0$, define
\begin{align}\label{eq:2.3}
	U_t f(r) := \mathbbm{1}_{[0, \infty)} (r - t) f (r-t), \quad r \in [0, \infty).
\end{align}
Then it is trivial to check that $(U_t)_{t > 0}$ is a strongly continuous (shortly: $C_0$-)contraction semigroup on $\mathcal{H}$ and it obviously can be restricted to a $C_0$-semigroup on $\mathcal{V}$ (even in this case consisting also of contractions on $\mathcal{V}$).
Now for $k$ as above and $\mu_t^k$, $t \geq 0$, as defined in \eqref{eq:3.4} below, we define for $f \in \mathcal{H}$
\begin{align}\label{eq:2.4}
 U_t^k f := \int\limits_{0}^{\infty} U_s f\  \mu_t^k( \mathrm{d}s)= f * \mu_t^k, \quad t \geq 0,
\end{align}
It is a well-known fact (see e.g. \cite[Chap.~II, Sect.~4b]{MR92}), that $(U_t^{k})_{t > 0}$ is also a $C_0$-semigroup of contractions on $\mathcal{H}$.
Let $\Lambda^k$ with domain $D(\Lambda^k, \mathcal H)$ be its infinitesimal generator on $\mathcal H$.

Obviously, $(U^k_t)_{t>0}$ can be restricted to a $C_0$-semigroup on $\mathcal{V}$ (again consisting of contractions).
The generator of the latter is again $\Lambda^k$, but with domain
\begin{align*}
  D(\Lambda^k, \mathcal{V}): = \{u\in D (\Lambda^k, \mathcal{H})\cap\mathcal{V} \mid \Lambda ^k u \in \mathcal{V}\}.
\end{align*}
Then $D (\Lambda^k, \mathcal{V})$ is dense in $\mathcal{V}$, hence so is $D(\Lambda^k,\mathcal{H}) \cap \mathcal{V}$.\\

By \cite[Lemma 2.3]{St99}, $\Lambda^k \colon D(\Lambda^k, \mathcal H) \cap \mathcal V \longrightarrow \mathcal V^*$ is closable as an operator from $\mathcal V$ to $\mathcal V^*$.
\yx We denote its closure again by $\Lambda^k$ and the domain of the latter by $\mathcal F^k$. Then \xy $\mathcal F^k$ is a Banach space with norm $\| u \|_{\mathcal F^k} := (\|u\|^2_{\mathcal V} + \| \Lambda^k u\|^2_{\mathcal V^*})^{\frac12}$, $u \in \mathcal F^k$.
We would like to mention here that, as will be seen in the applications in Section \ref{section:7}, $\mathcal F^k$ is a generalization of a space-time Sobolev space with generalized time-fractional derivative.
It will turn out (see Theorem \ref{thm:2.2} below) that it is the appropriate space in which equation \eqref{eq:2.1} can be solved.

Finally, we define a convenient domain of $\partial^{*k}_t$, namely:
\begin{align}\label{eq:2.5}
D(\partial^{*k}_t):= \{u\in  \mathcal V^*  \mid k*u\in W^{1,1} ((0,T);V^*),  ~ \forall \, T \in (0,\infty) \},
\end{align}
where $W^{1,1} ((0,T); V^*)$ denotes the standard Sobolev space of order $1$ in $L^1([0,T];V^*)$.

We recall that for $u \in L^1([0,\infty); V^*)$ we set $u\equiv0$ on $(\infty,0)$, hence
\begin{align}\label{eq:2.6}
	(k * u) (t) = \int_0^t k (t-s)  u(s) \, \mathrm{d}s.
\end{align}
Then, obviously, for all $T \in (0, \infty)$
\begin{align}\label{eq:2.7'}
 k * u = (\mathbbm{1}_{[0,T]} k) * (\mathbbm{1}_{[0,\infty)} u) \quad \text{ on } [0,T],
\end{align}
where  for $p \in [1, \infty)$ the function on the right hand side belongs to $L^p(\R;V^*)$ if so does $\mathbbm{1}_{[0,\infty)} u$,
and is in $C(\R; V^*)$, if in addition $k \in L^{p'}_{\mathrm{loc}}([0, \infty))$, where $p':=\frac{p}{p-1}$.

For $\gamma \in (0,\infty)$, $p\in[1,\infty)$, and $E=\mathbb{R},V,V^*$ or $H$ we set
\begin{align}\label{eq:2.7''}
	L^p_\gamma ( [0, \infty); E) := L^p( [0, \infty) ; E, e^{-\gamma t} \mathrm{d}t),
\end{align}
\zx and define $\mathcal{V}_\gamma$, $\mathcal{H}_\gamma$, $\mathcal{V}_\gamma^*$ as in \eqref{eq:2.33} with Lebesgue measure $dt$ replaced by $e^{-\gamma t}dt$.\xz

\begin{Theorem}\label{thm:2.1}
Suppose that $k$ satisfies \ref{cond:k}. Then:
\begin{enumerate}[label=(\roman*), leftmargin=1.5cm]
\item $\mathcal F^k \subset D(\partial^{*k}_t)$ and
		$$\Lambda^k u =- \partial^{*k}_t u \yx\quad for\ all\ u\in\mathcal{F}^k.\xy$$
		In particular, $k*u\in C([0,\infty),H)$.
\item (``strong dissipativity \zx in $\mathcal{V}_\gamma\subset \mathcal{H}_\gamma\subset \mathcal{V}^*_\gamma$ \xz''). For every $\gamma \in (0,\infty)$ and all $u \in \mathcal F^k$
		\begin{align}\label{eq:2.8}
			\int_0^\infty {}^{}_{V^*}\langle \partial^{*k}_t u(s), u(s) \rangle_{V}^{} \, e^{-\gamma s} \, \mathrm{d}s \geq \frac12 \psi^k(\gamma) \int_0^\infty \|u(s)\|^2_H e^{-\gamma s} \, \mathrm{d}s,
		\end{align}
		where
		\begin{align}\label{eq:2.9}
			\psi^k(\gamma) := \int_{(0, \infty)} (1 - e^{-\tau \gamma}) M^k (\mathrm{d} \tau) \ (>0~!)
		\end{align}
		and $M^k$ is the unique measure on $( (0, \infty), \mathcal B(0, \infty) )$ such that $k(s) = M^k( (s,\infty) )$, $s\in(0,\infty)$ (see the beginning of Section \ref{section:3} for more details, in particular \eqref{eq:3.2}).
\end{enumerate}
\end{Theorem}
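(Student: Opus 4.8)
The plan is to reduce both parts to two facts about the subordination that defines $U_t^k$. The first is the elementary Laplace-transform identity $\lambda\,\widehat k(\lambda)=\psi^k(\lambda)$ for $\lambda>0$, where $\widehat g(\lambda):=\int_0^\infty e^{-\lambda s}g(s)\,\mathrm ds$: writing $k(s)=M^k((s,\infty))$ and using Fubini, $\widehat k(\lambda)=\int_{(0,\infty)}\frac{1-e^{-\lambda\tau}}{\lambda}\,M^k(\mathrm d\tau)=\tfrac1\lambda\psi^k(\lambda)$. The second is the classical subordination formula for generators (Phillips' theorem): since $\psi^k$ in \eqref{eq:2.9} is the Bernstein function with Lévy measure $M^k$ and no drift or killing term, one has, for $u$ in the domain $D(A)$ of the generator $A$ of the shift semigroup $(U_t)_{t>0}$,
\[
\Lambda^k u=\int_{(0,\infty)}(U_\tau u-u)\,M^k(\mathrm d\tau),
\]
the integral converging in $\mathcal H$ because $\|U_\tau u-u\|_{\mathcal H}\le(\tau\wedge1)\,C_u$ and $\int_{(0,\infty)}(\tau\wedge1)\,M^k(\mathrm d\tau)<\infty$. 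Evaluating pointwise and splitting at $\tau=t$ gives the nonlocal representation
\[
-\Lambda^k u(t)=k(t)\,u(t)+\int_{(0,t)}\bigl(u(t)-u(t-\tau)\bigr)\,M^k(\mathrm d\tau)=:Nu(t),
\]
and a comparison of Laplace transforms (using $\widehat{Nu}(\lambda)=\psi^k(\lambda)\widehat u(\lambda)$ together with $\lambda\widehat k(\lambda)=\psi^k(\lambda)$) identifies $Nu=\partial^{*k}_t u$.

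For part (i) this already gives $\Lambda^k u=-\partial^{*k}_t u$ on the core $D(A)$ (which is a core for $\Lambda^k$ on $\mathcal H$). To reach all of $\mathcal F^k$ I use that $\mathcal F^k$ is by definition the domain of the closure of $\Lambda^k\colon\mathcal V\to\mathcal V^*$: given $u\in\mathcal F^k$, pick $u_n$ in the core with $u_n\to u$ in $\mathcal V$ and $\Lambda^k u_n\to\Lambda^k u$ in $\mathcal V^*$. Then $u_n\to u$ in $\mathcal H$, so by \eqref{eq:2.7'} and Young's inequality $k*u_n\to k*u$ in $L^1_{\mathrm{loc}}([0,\infty);V^*)$, while $\frac{\mathrm d}{\mathrm dt}(k*u_n)=\partial^{*k}_t u_n=-\Lambda^k u_n\to-\Lambda^k u$ in $L^1_{\mathrm{loc}}([0,\infty);V^*)$; closedness of the weak derivative yields $k*u\in W^{1,1}((0,T);V^*)$ for every $T$, i.e. $u\in D(\partial^{*k}_t)$, and $\partial^{*k}_t u=-\Lambda^k u$. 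The assertion $k*u\in C([0,\infty);H)$ is a finer regularity statement: it does not follow from the $W^{1,1}((0,T);V^*)$-bound alone and is where the monotonicity of $k$ must be exploited (as in the Caputo case of \cite{LRS18}); I would obtain it by approximating $k$ by more regular kernels and passing to the limit.

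For part (ii) I work from the nonlocal representation $Nu=\partial^{*k}_t u$. For $u$ in the core, the elementary Hilbert-space inequality $\langle a-b,a\rangle_H\ge\tfrac12(\|a\|_H^2-\|b\|_H^2)$ gives, for $\mathrm dt$-a.e. $t$,
\[
{}_{V^*}\langle\partial^{*k}_t u(t),u(t)\rangle_V\ge k(t)\|u(t)\|_H^2+\tfrac12\int_{(0,t)}\bigl(\|u(t)\|_H^2-\|u(t-\tau)\|_H^2\bigr)\,M^k(\mathrm d\tau).
\]
Multiplying by $e^{-\gamma t}$, integrating over $[0,\infty)$ and applying Fubini to the double integral, a direct computation — in which the factor $\int_\tau^\infty e^{-\gamma t}\,\mathrm dt$ produces precisely $\psi^k(\gamma)=\int_{(0,\infty)}(1-e^{-\gamma\tau})M^k(\mathrm d\tau)$, cf. \eqref{eq:2.9} — shows that the right-hand side integrates to
\[
\tfrac12\psi^k(\gamma)\int_0^\infty\|u(t)\|_H^2e^{-\gamma t}\,\mathrm dt+\tfrac12\int_0^\infty k(t)\|u(t)\|_H^2e^{-\gamma t}\,\mathrm dt,
\]
and discarding the last nonnegative term gives \eqref{eq:2.8}; positivity $\psi^k(\gamma)>0$ holds because $k\not\equiv0$ forces $M^k\neq0$. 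Finally I pass from the core to all of $\mathcal F^k$ by density, both sides of \eqref{eq:2.8} being continuous under convergence in $\mathcal F^k$ (note $e^{-\gamma s}\le1$, so $\mathcal V$- and $\mathcal V^*$-convergence yield the corresponding weighted convergence).

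The hardest part will be the rigour underlying these formal manipulations. The nonlocal integrals above converge only in $V^*$ for general $u\in\mathcal F^k$, since $M^k$ may carry infinite mass near $0$ (the Caputo kernel being the prototypical case), so the pointwise identity $Nu=\partial^{*k}_t u$ and the pointwise inequality must first be established on a regular core — where $u$, $\partial^{*k}_t u$ and all integrals are genuinely $H$-valued — and then transported to $\mathcal F^k$ by the closedness from part (i). The same difficulty reappears in the continuity statement $k*u\in C([0,\infty);H)$, whose proof relies essentially on $k$ being non-increasing rather than merely integrable, and which I expect to be the most delicate regularity point of the theorem.
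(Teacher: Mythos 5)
Your strategy is genuinely different from the paper's and its analytic core is sound. The paper identifies $\Lambda^k$ through its Fourier/Laplace symbol (Propositions \ref{proposition:3.1} and \ref{corollary:3.2}) and proves \eqref{eq:2.8} by a purely semigroup-theoretic argument: the weighted contraction estimate of Lemma \ref{lemma:3.3} (Jensen plus \eqref{eq:3.6}), Cauchy--Schwarz, and differentiation at $t=0$ (Lemma \ref{lemma:3.4}). You instead invoke Phillips' subordination formula (available e.g.\ in \cite{SSV12}) to get the Marchaud-type representation
\begin{equation*}
-\Lambda^k u(t)=k(t)u(t)+\int_{(0,t)}\bigl(u(t)-u(t-\tau)\bigr)\,M^k(\mathrm{d}\tau)
\end{equation*}
on a core, and then integrate the inequality $\langle a-b,a\rangle_H\ge\tfrac12(\|a\|_H^2-\|b\|_H^2)$ against $e^{-\gamma t}\,\mathrm{d}t$. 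I checked your Fubini computation: it is correct, the factor $\int_\tau^\infty e^{-\gamma t}\,\mathrm{d}t$ does produce $\psi^k(\gamma)$, and you in fact obtain the slightly stronger bound with the extra nonnegative term $\tfrac12\int_0^\infty k(t)\|u(t)\|_H^2e^{-\gamma t}\,\mathrm{d}t$. Your Laplace-transform identification $Nu=\partial_t^{*k}u$ parallels the computation in the proof of Proposition \ref{corollary:3.2}. What your route costs is the justification of Fubini against a possibly infinite Lévy measure on a suitable core; what the paper's route buys is that no pointwise representation is ever needed.

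There is, however, one genuine gap: the assertion $k*u\in C([0,\infty);H)$ is part of statement (i), and you do not prove it. Moreover your diagnosis of it is wrong: it is not a point where the monotonicity of $k$ ``must be exploited'', and no approximation of $k$ by regular kernels is needed. Once the identity $\Lambda^k u=-\partial_t^{*k}u$ is in place, the continuity is standard Gelfand-triple regularity: for $u\in\mathcal F^k$ one has $u\in\mathcal V$, so by \eqref{eq:2.7'} and Young's inequality $k*u\in L^\alpha([0,T];V)\cap L^2([0,T];H)$ for every $T$, while $\frac{\mathrm{d}}{\mathrm{d}t}(k*u)=-\Lambda^k u\in\mathcal V^*$; the classical theorem on $H$-valued continuity of such functions in a Gelfand triple (\cite[Theorem 1.19, pp.25]{Ba10}, which is exactly what the paper invokes) then gives $k*u\in C([0,T];H)$ for all $T$. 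Monotonicity of $k$ enters only earlier, to produce $M^k$, the Bernstein function $\psi^k$ and the semigroup at all.

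A second, smaller gap is in your passage ``to all of $\mathcal F^k$ by density''. By definition $\mathcal F^k$ is the closure of $\Lambda^k$ restricted to $D(\Lambda^k,\mathcal H)\cap\mathcal V$, so an element $u\in\mathcal F^k$ comes equipped with approximants $u_n\in D(\Lambda^k,\mathcal H)\cap\mathcal V$ --- not with approximants in your core $D(A)$, and nothing in the definition says $D(A)\cap\mathcal V$ is dense in $(\mathcal F^k,\|\cdot\|_{\mathcal F^k})$. You need the intermediate step the paper carries out: first extend the representation (and, for (ii), the $H$-valued inequality) from $D(A)$ to all of $D(\Lambda^k,\mathcal H)$, using that $D(A)$ is an operator core in the $\mathcal H$-graph norm (this is part of Phillips' theorem) together with closedness of the weak derivative under $\mathcal H$-convergence, and only then invoke the defining approximation of $\mathcal F^k$. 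This is fixable with tools you already have, but as written the one-step density argument does not go through.
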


The proof of Theorem \ref{thm:2.1} will be given in Section \ref{section:3} below.
We only mention here that assertion (i) is easy to prove for sufficiently smooth functions.
The point here is that it holds for all $u \in \mathcal F^k$.
The proofs of the following two theorems are contained in Section \ref{section:4} below.

\begin{Theorem}\label{thm:2.2}
Suppose that $T \in [0,\infty)$, $k$ satisfies \ref{cond:k} and $A \colon [0, \infty) \times V \longrightarrow V^*$ satisfies \ref{cond:H1}--\ref{cond:H4}.
Furthermore, assume that for $C_1$ from \ref{cond:H2} there exists $\gamma \in (0, \infty)$ such that $\psi^k(\gamma) > 2 C_1$, which is always the case if $\lim\limits_{s \rightarrow 0} k(s) = \infty$.
 Then:
 \begin{enumerate}[label=(\roman*)]
 	\item For every $u_0 \in V$ and $f \in \mathcal{V}^*$, \eqref{eq:2.1} has a unique solution $u$ such that $u - u_0 \varphi \in \mathcal{F}^k$ for every $\varphi \in L^\alpha([0,\infty);  \mathbb{R})$ with $\varphi \equiv 1$ on $[0, T+1)$. In particular,
\begin{equation}\label{eq:2.10}
  u-u_0 \varphi\in L^\alpha([0,\infty); V); \   \partial_t^{*k} (u- u_0 \varphi) \in L^{\frac{\alpha}{\alpha-1}}([0,\infty); V^*)
\end{equation}
and $t \mapsto \int_0^t k (t-s) (u(s) - u_0 \varphi(s)) \, \mathrm{d}s$ has a continuous $H$-valued $\mathrm{d}t$-version.
\item If, in addition, \ref{cond:k2} holds, then for $\mathrm{d}t$-a.e.~$t \in [0, T]$,
\begin{equation}\label{eq:2.11}
 u(t)= u_0 - \int_0^t \tilde{k} (t-s) A(s,u(s))\, \mathrm{d}s +  \int_0^t \tilde{k} (t-s) f(s) \, \mathrm{d}s .
\end{equation}
Furthermore, if $\tilde k \in L^\alpha_{\mathrm{loc}}([0, \infty))$,  $t \mapsto u(t)$ has a  continuous $V^*$-valued $\mathrm{d}t$-version.
\end{enumerate}
\end{Theorem}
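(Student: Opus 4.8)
The plan is to solve \eqref{eq:2.1} directly rather than its integral form \eqref{eq:2.2}, by recasting it as a surjectivity statement for the path-space operator $\partial^{*k}_t+\mathcal A$ and reading off uniqueness from the strong dissipativity \eqref{eq:2.8}. First I would reduce to the homogeneous initial condition $u_0=0$. Fixing $\varphi\in L^\alpha([0,\infty);\R)$ with $\varphi\equiv 1$ on $[0,T+1)$ and setting $v:=u-u_0\varphi$, one uses that $k*\,\cdot\,$ is causal and that $u-u_0$ coincides with $v$ on $[0,T]$ to get $\partial^{*k}_t(u-u_0)=\partial^{*k}_t v$ there; hence \eqref{eq:2.1} is equivalent to $\partial^{*k}_t v+\hat{\mathcal A}v=f$ with $\hat A(t,w):=A(t,w+u_0\varphi(t))$, and the restriction to $[0,T]$ of the latter (posed on all of $[0,\infty)$) recovers \eqref{eq:2.1}. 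Translating the argument by the fixed path $u_0\varphi\in\mathcal V$ preserves \ref{cond:H1}--\ref{cond:H4} and, crucially, leaves the constant $C_1$ in \ref{cond:H2} unchanged (only $g$ and $C_2$ are modified). Thus it suffices to produce a unique $v\in\mathcal F^k$ solving $\partial^{*k}_t v+\mathcal A v=f$, i.e. to show $\partial^{*k}_t+\mathcal A\colon\mathcal F^k\subset\mathcal V\to\mathcal V^*$ is bijective (writing again $\mathcal A$ for $\hat{\mathcal A}$).

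For existence when $C_1=0$, the operator $\mathcal A\colon\mathcal V\to\mathcal V^*$ is maximal monotone, coercive and bounded, while by Theorem \ref{thm:2.1}(i) the operator $-\partial^{*k}_t=\Lambda^k$ is the generator of the $C_0$-semigroup $(U^k_t)_{t>0}$, which restricts to a $C_0$-semigroup on $\mathcal V$; hence the perturbation result Theorem \ref{eq:4.1} applies and gives surjectivity of $\mathcal A-\Lambda^k=\mathcal A+\partial^{*k}_t$ onto $\mathcal V^*$. For the weakly monotone case $C_1>0$ I would pass to the weighted triple $\mathcal V_\gamma\subset\mathcal H_\gamma\subset\mathcal V^*_\gamma$ with $\gamma$ chosen so that $\psi^k(\gamma)>2C_1$ (possible by hypothesis); this plays the role of the classical $e^{-\lambda t}$-trick for weakly monotone evolution equations. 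Splitting $\partial^{*k}_t+\mathcal A=(\partial^{*k}_t-C_1\,\mathrm{Id})+(\mathcal A+C_1\,\mathrm{Id})$, the shifted nonlinearity $\mathcal A+C_1\,\mathrm{Id}$ becomes genuinely monotone, whereas $\Lambda^k+C_1\,\mathrm{Id}$ is a bounded perturbation of $\Lambda^k$ and, thanks to \eqref{eq:2.8},
\begin{align*}
\int_0^\infty {}_{V^*}\langle(\partial^{*k}_t-C_1\,\mathrm{Id})u(s),u(s)\rangle_V\,e^{-\gamma s}\,\mathrm{d}s\ \geq\ \Big(\tfrac12\psi^k(\gamma)-C_1\Big)\int_0^\infty\|u(s)\|_H^2\,e^{-\gamma s}\,\mathrm{d}s\ \geq\ 0,
\end{align*}
so that, being dissipative on $\mathcal H_\gamma$, it still generates a $C_0$-contraction semigroup on $\mathcal H_\gamma$ and on $\mathcal V_\gamma$. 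Theorem \ref{eq:4.1}, applied now in the weighted triple, then yields surjectivity of $\mathcal A+\partial^{*k}_t$. I expect this surjectivity step to be the main obstacle: one must verify all the hypotheses of the non-standard perturbation theorem and, in the weighted setting, match the weight with the generator so that $\Lambda^k+C_1\,\mathrm{Id}$ remains the generator of a $C_0$-(contraction) semigroup while $\mathcal A+C_1\,\mathrm{Id}$ stays maximal monotone and coercive. Transferring back to the unweighted assertions \eqref{eq:2.10} is then a localization, using that \eqref{eq:2.1} is only required on $[0,T]$, where $e^{-\gamma t}$ is bounded above and below, and that $(\partial^{*k}_t v)(t)$ for $t\le T$ depends only on $v|_{[0,T]}$.

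For uniqueness, if $u^1,u^2$ are two solutions for the same $\varphi$, then $w:=u^1-u^2\in\mathcal F^k$ satisfies $\partial^{*k}_t w=-(\mathcal A u^1-\mathcal A u^2)$ on $[0,T]$; testing with $w$ in the weighted inner product and combining \eqref{eq:2.8} with \ref{cond:H2} gives $(\tfrac12\psi^k(\gamma)-C_1)\|w\|^2_{\mathcal H_\gamma}\le 0$, whence $w=0$, which is the easy and standard consequence of strong dissipativity. The continuity of $t\mapsto\int_0^t k(t-s)(u-u_0\varphi)(s)\,\mathrm{d}s$ in $H$ is precisely the final clause of Theorem \ref{thm:2.1}(i). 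Finally, for (ii) I would, assuming \ref{cond:k2}, integrate \eqref{eq:2.1} in time, use that $k*(u-u_0)\in C([0,\infty);H)$ vanishes at $0$, and convolve with $\tilde k$, exploiting $\tilde k*k\equiv 1$ (so that convolution with it is integration) to arrive at \eqref{eq:2.11}; the continuous $V^*$-valued version of $u$ when $\tilde k\in L^\alpha_{\mathrm{loc}}$ then follows from \eqref{eq:2.11} by a Young/Hölder estimate for the convolution $\tilde k*(f-\mathcal A u)$, using $\mathcal A u\in L^{\frac{\alpha}{\alpha-1}}([0,\infty);V^*)$ (from \ref{cond:H4}) and $f\in\mathcal V^*$, via the continuity mechanism recorded in \eqref{eq:2.7'}.
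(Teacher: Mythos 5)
Your reduction to $u_0=0$, your treatment of the monotone case $C_1=0$ via Theorem \ref{thm:4.2}, your uniqueness argument and your sketch of part (ii) all coincide with what the paper does. The genuine gap is in the existence step for $C_1>0$, which is the heart of the theorem. You transplant the whole framework to the weighted triple $\mathcal V_\gamma\subset\mathcal H_\gamma\subset\mathcal V_\gamma^*$ and apply the perturbation theorem there; even granting the re-verifications you flag (which are real work: the identification $\Lambda^k=-\partial_t^{*k}$ in Propositions \ref{proposition:3.1}, \ref{corollary:3.2} and Theorem \ref{thm:2.1}(i) is proved by Fourier/Plancherel arguments in the \emph{unweighted} $L^2$-space and would have to be redone for $\mathcal H_\gamma$), what this yields is a solution $v$ of $\partial_t^{*k}v+\mathcal Av=f$ on all of $[0,\infty)$ lying in the \emph{weighted} space $\mathcal F^k_\gamma$. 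The theorem, however, asserts $u-u_0\varphi\in\mathcal F^k$, and $\mathcal F^k\subset\mathcal V\subset L^2([0,\infty);H)$ with Lebesgue measure: this is a global constraint on $[0,\infty)$ which the solution of the \emph{globally posed} weakly monotone equation can genuinely violate. Concretely, take $\alpha=2$, $V=H_0^1(D)\subset H=L^2(D)$ with $D$ bounded, $A(t,v)=-\Delta v-C_1v$ with $C_1>\lambda_1$ (the principal Dirichlet eigenvalue), $f=0$, $u_0=e_1$ the principal eigenfunction, and $k$ the Caputo kernel \eqref{eq:1.3}; then \ref{cond:H1}--\ref{cond:H4} hold with $g\equiv0$, but the component $c(t)=\langle u(t),e_1\rangle_H$ of any global solution satisfies $\partial_t^{*k}(c-1)=(C_1-\lambda_1)c$, so $c(t)=E_\beta\bigl((C_1-\lambda_1)t^\beta\bigr)\geq 1$ for all $t$ (Mittag--Leffler growth); hence $u\notin L^2([0,\infty);H)$, a fortiori $u-u_0\varphi\notin\mathcal F^k$, although $u$ does lie in $\mathcal V_\gamma$ for $\psi^k(\gamma)>2C_1$. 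Your proposed repair --- ``localization to $[0,T]$ where the weight is comparable to $1$'' --- cannot close this: restriction to $[0,T]$ says nothing about membership in $\mathcal F^k$, and truncating or cutting $v$ off after time $T$ takes you out of the domain of $\Lambda^k$ (the cut-off creates a jump whose $k$-convolution fails to be $W^{1,1}$ with derivative in $\mathcal V^*$, and the approximability by $D(\Lambda^k,\mathcal H)\cap\mathcal V$ in graph norm is lost).

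The paper avoids exactly this trap by never changing the triple. It applies Theorem \ref{thm:4.2} in the \emph{unweighted} spaces to the monotone operator $\mathcal A+C_1I$, obtaining for \emph{every} $g\in\mathcal H$ a solution $u_g\in\mathcal F^k$ of $\partial_t^{*k}u_g+\mathcal Au_g+C_1u_g=g+f$ on $[0,\infty)$; the weight then enters only through a scalar estimate: by \ref{cond:H2} and the strong dissipativity \eqref{eq:2.8} (the same inequality you invoke), the map $g\mapsto C_1u_g|_{[0,T]}$ is a contraction on $L^2([0,T];H)$ in the $e^{-\gamma s}\,\mathrm{d}s$-norm with constant $2C_1/\psi^k(\gamma)<1$, and Banach's fixed point theorem produces $g$ with $C_1u_g=g$ a.e.\ on $[0,T]$. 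The resulting $u_g$ solves \eqref{eq:2.1} on $[0,T]$, but past $T$ it continues to solve the \emph{monotone} equation (the $+C_1u$ term is not cancelled there), which is precisely why it remains in the unweighted $\mathcal F^k$ --- in the counterexample above it decays instead of growing. So what is missing from your proposal is this fixed-point device confining the weak monotonicity defect to $[0,T]$ while keeping the global problem monotone; without it, weighted surjectivity alone cannot deliver the conclusion of the theorem.
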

Now we turn to our last main result, namely the stochastic version of \eqref{eq:2.1} and \eqref{eq:2.2}.\\
Suppose that $U$ is a Hilbert space and $W(t)$  is  a $U$-valued cylindrical Wiener process  defined on a filtered probability space $(\Omega,\mathcal{F},(\mathcal{F}_t)_{t\geq0},\mathbb{P})$ with normal filtration $\mathcal{F}_t$, $t\geq0$.
Now we consider stochastic nonlinear evolution equations with generalized time-fractional derivative of  type
\begin{equation}\label{eq:2.12}
 \partial_t^{*k_1}(X(t)-x_0)+A(t,X(t))=\partial_t^{*k_2} \int_0^t B(s) \, \mathrm{d} W(s),  \quad 0<t<T,
\end{equation}
where $x_0\in V$ and $B \colon [0,T]\longrightarrow L_{HS}(U; H)$ is measurable, here $(L_{HS}(U; H), \|\cdot\|_{HS})$ denotes the space of all Hilbert--Schmidt operators from $U$ to $H$.
Note that, if $k_1$ satisfies \ref{cond:k2}, the integral form of \eqref{eq:2.12} is as follows
\begin{align}
 X(t)= x_0 - \int_0^t \widetilde{k_1} (t-s) A(s,X(s))\, \mathrm{d}s +  \int_0^t  (\widetilde{k_1}*k_2) (t-s)  B(s) \, \mathrm{d} W(s).
\end{align}
For this we need to assume more about $k_1$ and $k_2$ from above, namely that they satisfy
\begin{enumerate}[label=(ks), leftmargin=1.5cm]
\item \label{cond:ks} ~ \ref{cond:k} holds for $k_1$ and $k_2$, and $k_1$ satisfies \ref{cond:k2} such that $\widetilde{k_1}*k_2  \in L^2_{\mathrm{loc}} ([0,\infty))$.
\end{enumerate}
Note that the stochastic integral term in \eqref{eq:2.12}
  $$   F(t):=\int_0^t (\widetilde{k_1}*k_2) (t-s)  B(s) \, \mathrm{d} W(s)  $$
 is well-defined if e.g.~$\|B\|_{HS} \in L^\infty_{\mathrm{loc}} ([0,\infty))$, because then
$$  \int_0^t (\widetilde{k_1}*k_2)^2 (t-s) \|B(s)\|_{HS}^2 \, \mathrm{d}s < \infty. $$
If $k_1=k_2$, then the stochastic integral term is even well-defined if merely
$\|B\|_{HS} \in L^2_{\mathrm{loc}} ([0,\infty))$.
\begin{Theorem}\label{thm:2.3}
Suppose that \ref{cond:ks} holds, $A$ satisfies \ref{cond:H1}--\ref{cond:H4} and $B\in L^\infty([0,T], L_{HS}(U; H))$.
Assume also that $F\in V, \mathrm{d}t\otimes \mathbb{P}$-a.e.~(which is e.g.~the case if $B(t)$ is a Radonifying map from $U$ to $V$).
Then:
\begin{enumerate}[label=(\roman*)]
\item
For every $x_0\in V$ the ``shifted equation''
\begin{align}\label{eq:2.13}
	\partial_t^{*k_1} (X(t)-F(t)-x_0 \varphi) + A(t, X(t)) =0 , \quad \text{$\mathrm{d}t$-a.e. $t\in[0,T]$,}
\end{align}
has a unique $(\mathcal{F}_t)$-adapted solution $X$ such that $X-F-x_0\varphi \in \mathcal F^{k_1}$, $\mathbb P$-a.s.~for every $\varphi \in L^\alpha([0,\infty) ; \R)$ with $\varphi \equiv 1$ on $[0,T+1)$.
In particular,
$$  X-F-x_0\varphi \in L^\alpha([0,\infty); V); \   \partial_t^{*k_1} (X-F-x_0\varphi)\in L^{\frac{\alpha}{\alpha-1}}([0,\infty); V^*), \  \mathbb{P}\text{-a.s.}  $$
and $t \mapsto \int_0^t k_1 (t-s) (X(s) - x \varphi(s)) \, \mathrm{d}s\ \mathbb{P}$-a.s. has a continuous $H$-valued $\mathrm{d}t$-version.
\item For $\mathrm{d}t$-a.e.~$t \in [0, T]$,
\begin{align}\label{eq:2.14}
 X(t)= x_0 - \int_0^t \widetilde{k_1} (t-s) A(s,X(s))\, \mathrm{d}s +  \int_0^t (\widetilde{k_1}*k_2) (t-s) B(s) \, \mathrm{d} W(s),\ \mathbb{P}\text{-a.s.}
\end{align}
Furthermore, if $\widetilde{k_1} \in L^\alpha_{\mathrm{loc}} ([0, \infty))$, $t \mapsto X(t)\ \mathbb{P}$-a.s. has a  continuous $V^*$-valued $dt$-version.
\end{enumerate}
\end{Theorem}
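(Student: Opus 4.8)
The plan is to deduce Theorem \ref{thm:2.3} from the deterministic result Theorem \ref{thm:2.2} by a pathwise shift that removes the stochastic term. The starting observation is the linear identity
\[
\partial_t^{*k_1} F = \partial_t^{*k_2}\int_0^t B(s)\,\mathrm{d}W(s),
\]
which holds because $k_1 * F = k_2 * \int_0^{\cdot} B\,\mathrm{d}W$: by stochastic Fubini and $\widetilde{k_1}*k_1=\mathbbm{1}$ (the constant function, by \ref{cond:k2}, so convolution with it is $\int_0^{\cdot}$), both sides equal $(\mathbbm{1}*k_2)\star(B\,\mathrm{d}W)$, where $\star$ denotes stochastic convolution; differentiating gives the claim. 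Subtracting $\partial_t^{*k_1}F$ from \eqref{eq:2.12} thus turns it into the shifted equation \eqref{eq:2.13}. Setting $Z := X - F - x_0\varphi$ and, for each fixed $\omega$, $\hat A(t,v) := A(t, v + F(t) + x_0\varphi(t))$, equation \eqref{eq:2.13} becomes the purely deterministic equation $\partial_t^{*k_1} Z + \hat A(t,Z) = 0$, i.e.\ an instance of \eqref{eq:2.1} with initial condition $0$, right-hand side $0$, kernel $k_1$ and drift $\hat A$.

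The main obstacle is to check that, for $\mathbb P$-a.e.\ $\omega$, the random operator $\hat A$ again satisfies \ref{cond:H1}--\ref{cond:H4} (with the same $\alpha$, possibly enlarged constants and a new, a.s.\ finite, forcing term), and to secure the path regularity of $F$ that this requires. Hemicontinuity \ref{cond:H1} is immediate, and weak monotonicity \ref{cond:H2} holds with the \emph{same} $C_1$, since the shift by $w(t):=F(t)+x_0\varphi(t)$ cancels in both slots of $\langle \hat A(t,v_1)-\hat A(t,v_2), v_1-v_2\rangle$. For \ref{cond:H3} and \ref{cond:H4} one writes $\langle \hat A(t,v),v\rangle = \langle A(t,v+w),v+w\rangle - \langle A(t,v+w),w\rangle$, absorbs the cross term by Young's inequality applied to $\|A(t,v+w)\|_{V^*}\|w(t)\|_V$ together with \ref{cond:H4}, and uses $\|v\|_V^\alpha \le 2^{\alpha-1}(\|v+w\|_V^\alpha+\|w\|_V^\alpha)$; this yields \ref{cond:H3}--\ref{cond:H4} for $\hat A$ with a forcing controlled by $g(t)+\|w(t)\|_V^\alpha+\|w(t)\|_H^2$. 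The crucial input is that $w$, hence $F$, has paths in the relevant (weighted) path space $\mathbb P$-a.s.; I would work in the $\gamma$-weighted Gelfand triple $\mathcal V_\gamma\subset\mathcal H_\gamma\subset\mathcal V_\gamma^*$ used in the proof of Theorem \ref{thm:2.2}, with $\gamma$ chosen so that $\psi^{k_1}(\gamma)>2C_1$, and establish $F\in\mathcal V_\gamma$ $\mathbb P$-a.s.\ via the It\^o isometry:
\[
\mathbb E\int_0^\infty \|F(t)\|_H^2\, e^{-\gamma t}\,\mathrm{d}t
\le \|B\|_{L^\infty([0,T];L_{HS})}^2\Big(\int_0^T e^{-\gamma s}\,\mathrm{d}s\Big)\Big(\int_0^\infty (\widetilde{k_1}*k_2)^2(r)\,e^{-\gamma r}\,\mathrm{d}r\Big)<\infty,
\]
the exponential weight taming the contribution at infinity, while the hypothesis $F\in V$ $\mathrm{d}t\otimes\mathbb P$-a.e.\ supplies the $V$-regularity. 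This a.s.\ regularity of the stochastic convolution, needed to keep the shifted coercivity forcing integrable, is the delicate point of the whole argument.

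With $\hat A$ verified, Theorem \ref{thm:2.2}(i) applies for $\mathbb P$-a.e.\ fixed $\omega$ and produces a unique $Z=Z(\omega)\in\mathcal F^{k_1}$ solving $\partial_t^{*k_1}Z+\hat A(t,Z)=0$; then $X:=Z+F+x_0\varphi$ solves \eqref{eq:2.13} with $X-F-x_0\varphi=Z\in\mathcal F^{k_1}$, and uniqueness is inherited from the pathwise uniqueness in Theorem \ref{thm:2.2}. It remains to verify $(\mathcal F_t)$-adaptedness. Here I would use that $\partial_t^{*k_1}$ is causal (the convolution $\int_0^t k_1(t-s)\,\cdot\,\mathrm{d}s$ only looks backward in time), so the solution restricted to $[0,t]$ depends only on the data on $[0,t]$; since $F(t)$ is $\mathcal F_t$-measurable and $x_0$ is deterministic, together with the measurable dependence of the (unique) solution on the data $\omega\mapsto F(\cdot,\omega)$ — read off from the monotone-operator/Galerkin construction underlying Theorem \ref{thm:2.2} — one concludes that $\omega\mapsto X(t)$ is $\mathcal F_t$-measurable.

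Finally, for part (ii): since \ref{cond:ks} forces $k_1$ to satisfy \ref{cond:k2}, I would invert $\partial_t^{*k_1}$ in the shifted equation exactly as in the passage from \eqref{eq:2.1} to \eqref{eq:2.2}. Convolving $\partial_t^{*k_1}(X-F-x_0)=-A(\cdot,X)$ with $\widetilde{k_1}$ and using $\widetilde{k_1}*\partial_t^{*k_1}v=v$ (valid since $k_1*v$ vanishes at $0$) gives, for $\mathrm{d}t$-a.e.\ $t\in[0,T]$,
\[
X(t)=x_0+F(t)-\int_0^t \widetilde{k_1}(t-s)A(s,X(s))\,\mathrm{d}s,
\]
which is \eqref{eq:2.14} upon inserting $F(t)=\int_0^t(\widetilde{k_1}*k_2)(t-s)B(s)\,\mathrm{d}W(s)$. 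The continuous $V^*$-version when $\widetilde{k_1}\in L^\alpha_{\mathrm{loc}}([0,\infty))$ follows as in Theorem \ref{thm:2.2}(ii): the convolution $\widetilde{k_1}*A(\cdot,X)$ is continuous by \eqref{eq:2.7'} (with $A(\cdot,X)\in L^{\alpha/(\alpha-1)}([0,\infty);V^*)$ by \ref{cond:H4}), and $F$ admits a continuous version, so their sum does too.
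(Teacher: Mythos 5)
Your proposal is correct and follows essentially the same route as the paper: a pathwise shift $u = X - F - x_0\varphi$ reducing \eqref{eq:2.13} to the deterministic equation with the random operator $\tilde A(t,v)=A(t,v+F(t))$, verification that $\tilde A$ still satisfies \ref{cond:H1}--\ref{cond:H4} (which the paper dismisses as ``easy to see'' but you spell out via Young's inequality), pathwise application of Theorem \ref{thm:2.2}, adaptedness read off from the approximation scheme behind Theorem \ref{thm:4.2}/Lemma \ref{lemma:a.2}, and inversion by convolution with $\widetilde{k_1}$ for part (ii). Your additional details (the identity $\partial_t^{*k_1}F=\partial_t^{*k_2}\int_0^\cdot B\,\mathrm{d}W$ via stochastic Fubini, and the It\^o-isometry bound for $F$ in the weighted space) are refinements of, not departures from, the paper's argument.
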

\begin{Remark} In \cite{LRS18}, we have investigated the case that $ \partial_t^{*k_1}=\partial_t^{\beta},   \partial_t^{*k_2}=\partial_t^{\gamma} $ and $A$ is monotone. Then it is easy to see that
the assumption  $(ks)$ is equivalent to $\gamma<\beta+\frac{1}{2}$. We want to remark that
the special case $\gamma=\beta$  or $\gamma=1$ has been intensively investigated for some semilinear SPDE models (such as the stochastic heat equation or the stochastic wave equation), see e.g. \cite{AX17,C17,CHHH,MN17,MN15} and more references therein. 
It's easy to see that we can also have fractional Brownian motion or L\'{e}vy process as the noise in (\ref{eq:2.12}).
\end{Remark}

\section{Generalized time-fractional derivatives as generators of $C_0$-semigroups and their strong dissipativity}\label{section:3}
In this section we prove Theorem \ref{thm:2.1}, so assume that $k$ satisfies $(k)$. By Caratheodory's theorem there exists a $\sigma$-finite (nonnegative) measure $M^k$ on $((0, \infty), \mathcal{B}((0,\infty)))$ such that
\begin{align}\label{eq:3.0}
M^k((s,\infty)) = k(s),\quad s\in (0, \infty).
\end{align}
By Fubini's theorem it is easy to show that
\begin{align}\label{eq:3.1}
\int_{(0,\infty)} \tau \wedge 1\ M^k(\,\mathrm{d}\tau) < \infty.
\end{align}
Define
\begin{align*}
\mathbb{C}_{\geq0}:=\{z\in\mathbb{C}\vert \operatorname{Re\,}z\geq0\}
\end{align*}
and
\begin{align*}
\mathbb{C}_{>0}:=\{z\in\mathbb{C}\vert\operatorname{Re\,}z>0\}.
\end{align*}
We define the following function $\psi^k:\mathbb{C}_{\geq0}\longrightarrow\mathbb{C}$ by
\begin{align}\label{eq:3.2}
\psi^k(\lambda):=\int_{(0,\infty)}(1-e^{-\lambda\tau})\ M^k(\,\mathrm{d}\tau),\quad \lambda\in\mathbb{C}_{\geq0},
\end{align}
which by \eqref{eq:3.1} is well-defined and holomorphic on $\mathbb{C}_{>0}$, as well as continuous on $\mathbb{C}_{\geq0}$ (see \cite[p.25]{SSV12} for details). Hence the same is true for the function
\begin{align}\label{eq:3.4'}
\lambda\mapsto e^{-t\psi^k(\lambda)},\quad \lambda\in\mathbb{C}_{\geq0},
\end{align}
for every $t\in [0,\infty)$. Furthermore for every $t\in[0,\infty)$, since $t\psi^k$ restricted to $(0,\infty)$ is a nonnegative Bernstein function (see \cite[Theorem 3.2]{SSV12}), there exists a unique probability measure $\mu_t^k$ on $([0,\infty),\mathcal{B}([0,\infty))$ such that
\begin{align}\label{eq:3.4}
\int_{[0,\infty)}e^{-\lambda s}\mu_t^k(\,\mathrm{d}s)=e^{-t\psi^k(\lambda)},\quad \lambda\in(0,\infty),
\end{align}
(see \cite[Theorems 3.7 and 1.4]{SSV12}). Furthermore, since the Laplace transform
\begin{align}
\mathcal{L}\mu_t^k(\lambda):=\int_{[0,\infty)}e^{-\lambda s}\mu_t^k(\, \mathrm{d}s)
\end{align}
is defined for all $\lambda\in\mathbb{C}_{\geq0}$ and is obviously holomorphic on $\mathbb{C}_{>0}$, as well as continuous on $\mathbb{C}_{\geq0}$, \eqref{eq:3.4} implies that
\begin{align}\label{eq:3.6}
\mathcal{L}\mu_t^k(\lambda)=e^{-t\psi^k(\lambda)} \quad \text{for all } \lambda\in\mathbb{C}_{\geq0}.
\end{align}
 In particular, we have for every $t\in[0,\infty)$ for the Fourier transform $\hat{\mu}^k_t$ of $\mu^k_t$
 \begin{align}\label{eq:3.7}
 \hat{\mu}_t^k(\lambda):&=\int_{(0,\infty)}e^{i\lambda s}\mu_t^k(\, \mathrm{d}s)=e^{-t\psi^k(-i\lambda)}= e^{-t\int_{(0,\infty)}(1-e^{i\lambda\tau})M^k(\, \mathrm{d}\tau)},\quad \lambda\in\mathbb{R}.
 \end{align}
 By \eqref{eq:3.1} the function $\R\ni\lambda\mapsto |\psi^k(-i\lambda)|$ is of at most linear growth.

 Now let us consider the $C_0$-semigroup $(U_t^k)_{t\geq0}$ on $\mathcal{H}$ with infinitesimal generator $(\Lambda^k, D(\Lambda^k,\mathcal{H}))$ introduced in Section \ref{section:2}. First we characterize this generator through its Fourier transform and as a corollary we prove that it coincides with $\partial^{*k}_t$ on an operator core.
\begin{Proposition}\label{proposition:3.1}
The generator $(\Lambda^k, D (\Lambda^k, \mathcal{H}))$ of $(U_t^{k})_{t > 0}$ (on $\mathcal{H}$), defined in Section \ref{section:2}, is given as follows
 $$D (\Lambda^k, \mathcal{H}) = \{ u \in \mathcal{H} \mid r \mapsto |\psi^k(-ir)|  \hat{u}(r) \in L^2 (\R; H_\mathbb{C})\},$$
 $$(\Lambda^k u)^\wedge(r) = -\psi^k(-ir) \hat{u} (r), \ r \in \R,$$
 where $H_\mathbb{C}$ denotes the complexification of $H$ and $\hat{u}$ denotes the Fourier transform of $u$ considered as a function from $\R$ to $H$, i.e. $u:=0$ on $(-\infty,0)$ and
 \begin{align*}
 \hat{u}(r):=\int_\mathbb{R}e^{irs}u(s)\,\mathrm{d}s,\quad r\in\mathbb{R}.
 \end{align*}
\end{Proposition}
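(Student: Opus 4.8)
The plan is to identify the generator of the convolution semigroup $(U_t^k)_{t>0}$ on $\mathcal{H} = L^2([0,\infty);H)$ by passing to the Fourier transform, where convolution becomes multiplication and the semigroup action diagonalizes. First I would extend every $u \in \mathcal{H}$ by zero to $\R$ and work with the Hilbert space $L^2(\R;H)$, on which the Fourier transform $\mathcal{F}$ (suitably normalized as in the statement) is, up to a constant, a unitary isomorphism onto $L^2(\R;H_{\C})$ by Plancherel's theorem. Since $U_t^k f = f * \mu_t^k$ by \eqref{eq:2.4}, and $\mu_t^k$ is a probability measure on $[0,\infty)$, the convolution theorem gives $(U_t^k f)^\wedge(r) = \hat{\mu}_t^k(r)\,\hat{f}(r)$ for a.e.\ $r \in \R$. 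By \eqref{eq:3.7} we have $\hat{\mu}_t^k(r) = e^{-t\psi^k(-ir)}$, so on the Fourier side the semigroup acts as pointwise multiplication by $e^{-t\psi^k(-ir)}$.

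Next I would invoke the standard characterization of generators of multiplication semigroups. Conjugating $(U_t^k)_{t>0}$ by $\mathcal{F}$ yields the semigroup $(\widetilde{U}_t)_{t>0}$ on $L^2(\R;H_{\C})$ given by $(\widetilde{U}_t v)(r) = e^{-t\psi^k(-ir)} v(r)$. Because $\operatorname{Re}\psi^k(-ir) = \int_{(0,\infty)}(1-\cos(r\tau))\,M^k(\mathrm{d}\tau) \geq 0$ (from \eqref{eq:3.2} evaluated at $\lambda = -ir$), the multiplier $|e^{-t\psi^k(-ir)}| \le 1$, consistent with $(U_t^k)$ being a contraction semigroup. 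For a multiplication semigroup $v \mapsto e^{-t m(\cdot)}v$ with measurable symbol $m$, the generator is multiplication by $-m$, with domain $\{v : m\,v \in L^2\}$; this is a classical fact that I would either cite or verify directly by computing the strong limit $t^{-1}(\widetilde{U}_t v - v) \to -m\,v$ using dominated convergence (the pointwise limit being $-\psi^k(-ir)v(r)$, dominated via $|e^{-t\psi^k(-ir)} - 1|/t \le |\psi^k(-ir)|$, valid since $\operatorname{Re}\psi^k \ge 0$). Transporting back through $\mathcal{F}$ then gives exactly the claimed formula: $u \in D(\Lambda^k,\mathcal{H})$ iff $r \mapsto \psi^k(-ir)\hat{u}(r) \in L^2(\R;H_\C)$, and $(\Lambda^k u)^\wedge(r) = -\psi^k(-ir)\hat{u}(r)$. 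Since $|\psi^k(-ir)|$ is of at most linear growth (remarked after \eqref{eq:3.7}), the domain condition is equivalently phrased with $|\psi^k(-ir)|$ as in the statement.

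The main obstacle I anticipate is justifying the convolution theorem and the domination uniformly in the vector-valued, measure-convolution setting: $\mu_t^k$ is a probability measure (not an $L^1$ function), and $f$ is $H$-valued, so I would phrase $f * \mu_t^k$ as the Bochner integral $\int_{[0,\infty)} U_s f\,\mu_t^k(\mathrm{d}s)$ from \eqref{eq:2.4} and compute its Fourier transform by Fubini, using that $\widehat{U_s f}(r) = e^{irs}\hat{f}(r)$ (the shift $U_s$ from \eqref{eq:2.3} produces the modulation $e^{irs}$) together with $\int_{[0,\infty)} e^{irs}\,\mu_t^k(\mathrm{d}s) = \hat{\mu}_t^k(r)$. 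The interchange is legitimate because $\mu_t^k$ is a finite measure and $\|U_s f\|_{\mathcal{H}} \le \|f\|_{\mathcal{H}}$. A secondary point requiring care is that the generator obtained as a multiplication operator is automatically closed and densely defined, which matches the abstractly defined $(\Lambda^k, D(\Lambda^k,\mathcal{H}))$; here uniqueness of the generator of a $C_0$-semigroup closes the argument, so no separate core computation is needed at this stage. The genuinely delicate estimates — showing $|\psi^k(-ir)|$ controls the difference quotient uniformly — reduce to the elementary inequality $|e^{-z}-1| \le |z|$ for $\operatorname{Re} z \ge 0$, which makes the dominated convergence step routine once set up.
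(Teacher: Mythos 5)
Your overall strategy --- diagonalizing $(U_t^k)_{t>0}$ via the Fourier transform and reading off the generator as a multiplication operator --- is at heart the same computation the paper performs, and your Fubini/convolution step and the dominated-convergence bound $|e^{-z}-1|\le |z|$ for $\operatorname{Re}z\ge 0$ are correct. But there is a genuine gap at the sentence ``conjugating $(U_t^k)_{t>0}$ by $\mathcal{F}$ yields the semigroup $(\widetilde U_t)_{t>0}$ on $L^2(\R;H_\C)$'': it does not. The image of $\mathcal{H}=L^2([0,\infty);H)$ under $\mathcal{F}$ is a \emph{proper} closed (real-linear) subspace $Y\subsetneq L^2(\R;H_\C)$, consisting of the Hermitian-symmetric functions that are Fourier transforms of functions vanishing on $(-\infty,0)$; the conjugated semigroup is only the restriction of the multiplication semigroup $v\mapsto e^{-t\psi^k(-i\cdot)}v$ to this invariant subspace. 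The classical fact you invoke --- the generator of a multiplication semigroup is multiplication by $-m$ with maximal domain --- is a statement about the full space; for the restriction to a closed invariant subspace the generator is the \emph{part} of that operator in $Y$, namely $\{v\in Y:\ mv\in L^2(\R;H_\C)\ \text{and}\ mv\in Y\}$. So, transported back, your argument yields $D(\Lambda^k,\mathcal{H})=\{u\in\mathcal{H}:\ \psi^k(-i\cdot)\hat u\in L^2\ \text{and}\ \mathcal{F}^{-1}\bigl(\psi^k(-i\cdot)\hat u\bigr)\in\mathcal{H}\}$, which is not yet the proposition: the extra support condition must be shown to be automatic. This is not a formality --- for a general bounded symbol $m$ (e.g.\ $m(r)=e^{-ira}$, $a>0$) and $v\in Y$ one can have $mv\in L^2$ but $mv\notin Y$; removing the condition genuinely uses that $e^{-t\psi^k(-i\cdot)}$ leaves $Y$ invariant, i.e.\ that $\mu_t^k$ is supported in $[0,\infty)$.

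The good news is that the fix is exactly the computation you already set up: for $u\in\mathcal{H}$ with $\psi^k(-i\cdot)\hat u\in L^2(\R;H_\C)$, the difference quotients $t^{-1}(U_t^ku-u)$ all lie in $\mathcal{H}$, and your dominated-convergence argument shows they converge in $L^2(\R;H_\C)$ to $\mathcal{F}^{-1}\bigl(-\psi^k(-i\cdot)\hat u\bigr)$; since $\mathcal{H}$ is closed in $L^2(\R;H_\C)$, the limit lies in $\mathcal{H}$ --- which is precisely the missing support statement --- and simultaneously proves $u\in D(\Lambda^k,\mathcal{H})$ with the claimed formula. This settles $D\subseteq D(\Lambda^k,\mathcal{H})$. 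For the reverse inclusion you still need an argument beyond ``uniqueness of the generator'', since the multiplication-semigroup theorem does not apply verbatim on $Y$: either argue via resolvents (the resolvent is the Laplace transform of the semigroup, so $\bigl((\lambda-\Lambda^k)^{-1}f\bigr)^{\wedge}=(\lambda+\psi^k(-i\cdot))^{-1}\hat f$, and since $|\psi^k(-ir)|/|\lambda+\psi^k(-ir)|\le 1$ one gets $D(\Lambda^k,\mathcal{H})=(\lambda-\Lambda^k)^{-1}\mathcal{H}\subseteq D$), or do as the paper does: check $U_t^kD\subseteq D$, that $(\Lambda^k,D)$ is closed, and that $D$ is dense, and invoke the core theorem \cite[Theorem X.49]{RS75} to conclude $D=D(\Lambda^k,\mathcal{H})$.
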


\begin{proof}
 Below we consider each $\mu^{k}_t (ds)$ as a measure on all of $\R$, by defining
 \begin{align*}
  \mu_t^{k} (A) := \mu_t^{k} (A \cap [0, \infty)), \ A \in \mathcal{B} (\R).
 \end{align*}
  Let $D := \{ u \in \mathcal{H} \mid r \mapsto |\psi^k(-ir)| \hat{u} (r) \in L^2 (\R ; H_\mathbb{C})\}$.
 Then  for $u \in D$, because $u*\mu_t^k\in L^2(\R;H)$ and $\hat\mu_t^k$ is bounded, we have
 \begin{align*}
   \frac{1}{t} (U_t^{k} u - u)^\wedge (r)  & = \frac{1}{t} \Big( \int_{0}^{\infty} U_s u \ \mu_t^{k} (ds) - u \Big)^\wedge (r)\\
   & = \frac{1}{t} \Big( \int_{\R} u (\cdot - s) \ \mu_t^{k} (ds) - u \Big)^\wedge (r)\\
   & = \frac{1}{t} ( u * \mu_t^{k} - u)^\wedge (r)\\
   & = \frac{1}{t} \hat{u} (r) \Big( e^{-t \psi^k(-ir)} - 1 \Big) \xrightarrow[t \to 0]{} - \psi^k(-ir) \hat{u} (r)
  \end{align*}
 for $\mathrm{d}r$-a.e.~$r \in \R$. But since  for all $r \in \R$ and $t>0$,
 \begin{align*}
  \frac{1}{t} \Big| e^{-t \psi^k(-ir)} - 1 \Big|  \leq 2 \vert\psi^k(-ir)\vert,
 \end{align*}
 the last convergence also holds in $L^2(\mathbb{R};H_\mathbb{C})$. Hence $D \subset D (\Lambda^k, \mathcal{H})$ and
 \begin{equation}\label{eq:3.8}
  (\Lambda^k u)^\wedge (r) = - \psi^k(-ir) \hat{u} (r), \ r \in \R.
\end{equation}
 Because $\hat\mu_t^k$ is bounded, one similarly checks that
 \begin{equation}\label{eq:3.9}
  U_t^{k} D \subseteq  D  \quad \forall t > 0,
 \end{equation}
 and that $(\Lambda^k, D)$ is closed as an operator from $\mathcal{H}$ to $\mathcal{H}$.
 Since the function $\R\ni r\mapsto|\psi^k(-ir)|$ is at most of linear growth, $D$ is dense in $\mathcal{H}$. Hence \eqref{eq:3.8} implies (see \cite[Theorem X.49]{RS75})
 that $D$ is an operator core of $(\Lambda^k, D (\Lambda^k, \mathcal{H}))$, i.e.~ $D$ is dense in $D(\Lambda^k,\mathcal{H})$ with respect to the graph norm given by $\Lambda^k$. Consequently,
 $D = D (\Lambda^k, \mathcal{H})$ and $\Lambda^k$ is given by \eqref{eq:3.8}.
\end{proof}
\begin{Proposition}\label{corollary:3.2}
 $D(\Lambda^k,\mathcal{H}) \subset \mathcal F^k$ and for all $u \in D(\Lambda^k,\mathcal{H})$
\begin{equation}\label{eq:3.10}
  \Lambda^k u = - \frac{d}{dt} (k * u).
\end{equation}
\end{Proposition}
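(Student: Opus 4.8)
The plan is to use the explicit Fourier-transform description of $\Lambda^k$ from Proposition \ref{proposition:3.1} and match it against the Fourier transform of $-\frac{d}{dt}(k*u)$. First I would fix $u\in D(\Lambda^k,\mathcal H)$. By Proposition \ref{proposition:3.1}, this means $r\mapsto|\psi^k(-ir)|\hat u(r)\in L^2(\R;H_{\mathbb C})$ and $(\Lambda^k u)^\wedge(r)=-\psi^k(-ir)\hat u(r)$. The key elementary identity is the Lévy--Khintchine representation $\psi^k(-ir)=\int_{(0,\infty)}(1-e^{ir\tau})\,M^k(\mathrm d\tau)$ from \eqref{eq:3.7}, which I would rewrite so as to expose the factor $-ir$. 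Since $k(s)=M^k((s,\infty))$ by \eqref{eq:3.0} and $\int_{(0,\infty)}(\tau\wedge 1)\,M^k(\mathrm d\tau)<\infty$ by \eqref{eq:3.1}, Fubini gives $\int_0^\infty e^{irs}k(s)\,\mathrm ds=\int_{(0,\infty)}\big(\int_0^\tau e^{irs}\,\mathrm ds\big)M^k(\mathrm d\tau)=\frac{1}{-ir}\int_{(0,\infty)}(1-e^{ir\tau})M^k(\mathrm d\tau)$ for $r\neq 0$. Hence $\hat k(r)=\frac{\psi^k(-ir)}{-ir}$, i.e.\ $\psi^k(-ir)=(-ir)\,\hat k(r)$.

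Next I would bring in the convolution structure. Since $k\in L^1_{\mathrm{loc}}$ and $u\equiv 0$ on $(-\infty,0)$, on each $[0,T]$ the function $k*u$ agrees with $(\one_{[0,T]}k)*(\one_{[0,\infty)}u)$ by \eqref{eq:2.7'}, so $k*u$ is at least locally integrable and its Fourier transform satisfies $(k*u)^\wedge(r)=\hat k(r)\,\hat u(r)$ in the appropriate distributional sense. Combining with the identity from the previous step gives
\begin{align*}
\big(-\tfrac{d}{dt}(k*u)\big)^\wedge(r)=(-ir)\,(k*u)^\wedge(r)=(-ir)\hat k(r)\hat u(r)=-\psi^k(-ir)\hat u(r)=(\Lambda^k u)^\wedge(r).
\end{align*}
Here I am using that Fourier transform turns $\frac{d}{dt}$ into multiplication by $-ir$ (with my sign convention $\hat u(r)=\int e^{irs}u(s)\,\mathrm ds$). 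The right-hand side is in $L^2(\R;H_{\mathbb C})$ precisely because $u\in D(\Lambda^k,\mathcal H)$, which shows that $\frac{d}{dt}(k*u)$ exists as an $\mathcal H$-valued (hence $\mathcal V^*$-valued) function equal to $-\Lambda^k u\in\mathcal H\subset\mathcal V^*$. In particular $k*u\in W^{1,1}((0,T);V^*)$ for every $T$, so $u\in D(\partial^{*k}_t)$ and $\Lambda^k u=-\partial^{*k}_t u=-\frac{d}{dt}(k*u)$, which is \eqref{eq:3.10}.

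It remains to justify the inclusion $D(\Lambda^k,\mathcal H)\subset\mathcal F^k$, i.e.\ that $u$ lies in the domain of the \emph{closure} of $\Lambda^k$ as an operator from $\mathcal V$ to $\mathcal V^*$. For this I would approximate: the core $D$ from Proposition \ref{proposition:3.1} is dense in $D(\Lambda^k,\mathcal H)$ in the graph norm, and by mollifying in time and cutting off one produces $u_n\in D(\Lambda^k,\mathcal H)\cap\mathcal V$ with $u_n\to u$ and $\Lambda^k u_n\to\Lambda^k u$ in $\mathcal H$, hence in $\mathcal V^*$; the functions $u_n$ may be taken in $\mathcal V$ because smooth compactly-supported-in-time $H$-valued functions lie in $\mathcal V$ and in $D(\Lambda^k,\mathcal H)$. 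The main obstacle is precisely this last measure-theoretic/functional-analytic bookkeeping: one must verify that the approximants and their images converge in the $\mathcal V$- and $\mathcal V^*$-norms (not merely in $\mathcal H$), so that $u$ falls into the $\mathcal V\to\mathcal V^*$ closure $\mathcal F^k$ rather than just into $D(\Lambda^k,\mathcal H)$. The Fourier computation itself is routine once the identity $\psi^k(-ir)=(-ir)\hat k(r)$ is in hand; the real care is in handling the $r=0$ singularity of $\frac{1}{-ir}$ (harmless because the numerator $1-e^{ir\tau}$ vanishes there) and in making the convolution theorem rigorous given that $k$ is only locally integrable and $u$ only $L^2$ on $[0,\infty)$.
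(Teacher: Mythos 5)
Your Fourier--transform strategy founders on a point that is not a technicality: the identity $\psi^k(-ir)=(-ir)\,\hat k(r)$ is obtained by a Fubini argument that is invalid for the kernels this paper is about. The absolute value of your double integral is $\int_0^\infty\int_{(s,\infty)}|e^{irs}|\,M^k(\mathrm d\tau)\,\mathrm ds=\int_0^\infty k(s)\,\mathrm ds$, which is infinite whenever $k\notin L^1([0,\infty))$ --- e.g.\ for the Caputo kernel $k(s)=s^{-\beta}/\Gamma(1-\beta)$. So $\hat k$ does not exist as a Lebesgue integral, Fubini cannot be invoked, and the subsequent ``convolution theorem'' $(k*u)^\wedge=\hat k\,\hat u$ (with $k\notin L^1$ and $u$ only in $L^2$) is likewise unjustified: it needs a distributional convolution theorem and a meaning for the product of the distribution $\hat k$ with the $L^2$ function $\hat u$, neither of which you supply. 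This is exactly why the paper does \emph{not} argue on the Fourier side for this proposition: it uses the Laplace transform at real $\lambda>0$, where the factor $e^{-\lambda t}$ makes every interchange absolutely convergent (giving $\mathcal Lk(\lambda)=\psi^k(\lambda)/\lambda$ honestly), and it performs this computation only on the core $D_0=D(\Lambda^k,\mathcal H)\cap\mathcal V\cap D(\partial_t^{*k})\cap L^\infty([0,\infty);H)$, on which $\frac{d}{dt}(k*u)$ is known a priori to exist; the general $u\in D(\Lambda^k,\mathcal H)$ is then reached by showing $D_0$ is an operator core (invariance of $D(\partial_t^{*k})$ under $U^k_t$ plus the Reed--Simon core criterion) and passing to the limit via completeness of $W^{1,1}([0,T];V^*)$. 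Your plan proves the identity directly for every $u\in D(\Lambda^k,\mathcal H)$, which is precisely where the missing analysis sits.

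Second, what you dismiss as ``bookkeeping'' is not bookkeeping, and your mollification sketch cannot close it. By definition $\mathcal F^k$ is the domain of the closure of $\Lambda^k$ as an operator from $\mathcal V$ to $\mathcal V^*$, so $\mathcal F^k\subset\mathcal V\subset L^\alpha([0,\infty);V)$: its elements are $V$-valued paths. A generic $u\in D(\Lambda^k,\mathcal H)$ need not be $V$-valued at all --- take $u=\phi\,h_0$ with $\phi\in C_c^\infty((0,\infty))$ and $h_0\in H\setminus V$; then $u\in D(\Lambda^k,\mathcal H)$, but no sequence can approximate it in the $\mathcal V$-norm, and time-mollification never improves spatial ($V$ versus $H$) regularity. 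So the inclusion $D(\Lambda^k,\mathcal H)\subset\mathcal F^k$, read literally, is not provable; what the paper's proof actually establishes, and what is used later in the proof of Theorem \ref{thm:2.1}(i), is $D(\Lambda^k,\mathcal H)\subset D(\partial^{*k}_t)$ together with \eqref{eq:3.10}, i.e.\ $k*u\in W^{1,1}((0,T);V^*)$ for all $T$ and $\Lambda^k u=-\frac{d}{dt}(k*u)$. Your proof should target that statement; as written, you are both relying on unjustified Fourier identities and chasing an inclusion that the definitions rule out.
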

\begin{proof}
First  let $u \in D_0 := D(\Lambda^k, \mathcal{H}) \cap \mathcal{V} \cap D(\partial_t^{*k}) \cap L^\infty([0, \infty);H)$.
Then for $T \in (0, \infty)$
\begin{equation}\label{eq:3.11}
\| (k * u) (t) \|_{V^*} \leq \esssup\limits_{s \in [0,T]}
\|u(s)\|_{V^*}\int_0^tk(s)\,\mathrm{d}s \quad \text{ for $dt$-a.e.~} t \in [0, T],
\end{equation}
and the same inequality holds with $\|\cdot\|_H$ replacing $\|\cdot\|_{V^*}$.

Again we consider all appearing functions, originally only defined on $[0, \infty)$, as functions on all of $\R$
by defining them to be equal to zero on $\R \setminus [0, \infty)$.
As in the proof of Proposition \ref{proposition:3.1}, one can check that $D_0$ is dense in $\mathcal H$ and also that $U_t^k (D_0) \subset D_0$.
Concerning the latter we note that all spaces in the intersection defining $D_0$ are obviously invariant under $U_t^k$ except for $D(\partial^{*k}_t)$.
To see that this is also true for the latter, let $u\in D(\partial^{*k}_t)$. Then $U_t^ku=u*\mu_t^k\in L^1([0,\infty);V^*)$ and for $T\in(0,\infty)$ there exist $h\in L^1([0,T];V^*)$ and $v\in V^*$ such that for $r\in [0,T]$
\begin{align*}\
(k*u)(r)= v + \int_0^rh(\tau)\, \mathrm{d}\tau.
\end{align*}
But again by setting $h\equiv0$ on $(-\infty,0)$ and using Fubini's theorem
\begin{align*}
(k*U_t^ku)(r)&=(k*u*\mu_t^k)(r)
\\ &= v + \int_0^\infty\int_0^{r-s}h(\tau)\, \mathrm{d}\tau\, \mu_t^k( \mathrm{d}s)
\\ &= v + \int_0^\infty\int_{-s}^{r-s}h(\tau)\, \mathrm{d}\tau\, \mu_t^k(\mathrm{d}s)
\\ &= v + \int_0^\infty\int_0^rh(\tau-s)\, \mathrm{d}\tau\, \mu_t^k(\mathrm{d}s)
\\ &= v + \int_0^r(h*\mu_t^k)(\tau)\, \mathrm{d}\tau,\quad r\in[0,T].
\end{align*}
Since $h*\mu_t^k\in L^1([0,T];V^*)$, this implies that $U_t^ku\in D(\partial_t^{*k})$.
Again applying Theorem X.49 from \cite{RS75} we obtain that $D_0$ is an operator core of $(\Lambda^k, D(\Lambda^k, \mathcal H))$.
Hence it remains to prove \eqref{eq:3.10}.

Let us start with calculating the Laplace transform $\mathcal L$ of the right hand side of \eqref{eq:3.10} for any $u \in D(\partial_t^{*k}) \cap L^\infty([0, \infty);V^*)$.
So let $\lambda \in (0, \infty)$.
Then integrating by parts, using \eqref{eq:3.11} and Fubini's Theorem we obtain
\begin{align*}
\mathcal L \left( \frac{\mathrm{d}}{\mathrm{d}t} (k * u) \right) (\lambda) &= \int_0^\infty e^{-\lambda t} \frac{\mathrm{d}}{\mathrm{d}t} (k * u) (t) \, \mathrm{d}t
\\ &= \lim\limits_{T \rightarrow \infty} \left( e^{-\lambda T} (k * u) (T) + \lambda \int_0^T e^{-\lambda t} (k * u) (t) \, \mathrm{d}t \right)
\\ &= \lambda \int_0^\infty e^{-\lambda t} \int_0^t k(t-s) u(s) \, \mathrm{d}s \, \mathrm{d}t
\\ &= \lambda \int_0^\infty \int_s^\infty k(t-s) e^{-\lambda t} \, \mathrm{d}t \, u(s) \,\mathrm{d}s
\\ &= \int_0^\infty e^{-\lambda s} u(s) \,\mathrm{d}s \, \lambda \int_0^\infty M^k((t,\infty)) e^{-\lambda t} \, \mathrm{d}t
\\ &= \lambda\int_0^\infty\int_{(t,\infty)} M^k(\,\mathrm{d}s)e^{-\lambda t}\, \mathrm{d}t \, \mathcal{L}u(\lambda)
\\ &= \int_{(0, \infty)} \int_0^s\lambda e^{-\lambda t}\, \mathrm{d}t \, M^k(\mathrm{d}s)\, \mathcal{L}u(\lambda)
\\ &= \psi^k(\lambda)\mathcal{L}u(\lambda),
\end{align*}
where we used \eqref{eq:3.0} in the fifth inequality and \eqref{eq:3.2} in the last inequality.\\
For the left-hand side of \eqref{eq:3.10} and $u\in D_0$ we find for all $h \in H$, $\lambda \in (0, \infty)$, because of \eqref{eq:3.6}
\begin{align*}
\left\langle \int_{0}^{\infty} \Lambda^k u (r) e^{- \lambda r} \, \mathrm{d}r, h \right\rangle_H &= \lim \limits_{t \to 0} \frac{1}{t} \int_{0}^{\infty} \left\langle U^{k}_t  u(r) - u(r), h \right\rangle_H e^{- \lambda r} \, \mathrm{d}r \\
 & = \lim\limits_{t \to 0} \frac{1}{t} \Big(\mathcal{L} ( \langle u,h\rangle_H * \mu_t^{k}) - \mathcal{L} ( \langle u,h\rangle_H) \Big) (\lambda) \\
 & = \lim\limits_{t \to 0} \frac{1}{t} (e^{-t \psi^k(\lambda)} -1) \, \mathcal{L} ( \langle u,h\rangle_H) (\lambda)\\
 & = - \psi^{k}(\lambda) \left\langle \mathcal{L} u (\lambda), h \right\rangle_H.
\end{align*}
Hence, $\mathcal{L} (\Lambda^k (u)) (\lambda) = - \psi^{k}(\lambda) \mathcal{L} u (\lambda)$ and \eqref{eq:3.10} follows  for $u \in D_0$.

Now let $u \in D(\Lambda^k,\mathcal H)$.
Then, since $D_0$ is an operator core for $(\Lambda^k, D(\Lambda^k,\mathcal H))$, there exist $u_n \in D_0$, $n \in \mathbb N$, such that as $n \rightarrow \infty$
\begin{align*}
	u_n \longrightarrow u \quad \text{ and } \quad \Lambda^k u_n \longrightarrow \Lambda^k u \; \text{ in } \mathcal H.
\end{align*}
Let $T \in (0, \infty)$.
Then as $n \longrightarrow \infty$ by \eqref{eq:2.7'} and, since \eqref{eq:3.10} holds for $u_n$,
\begin{align*}
	k * u_n \longrightarrow k * u
\end{align*}
and
\begin{align*}
	-\frac{\partial}{\partial t} (k * u_n) \longrightarrow \Lambda^k u
\end{align*}
in $L^1([0,T];V^*)$.
Hence, the last assertion follows by the completeness of $W^{1,1}([0,T];V^*)$.

\end{proof}

After these preparations we can prove the first part of Theorem \ref{thm:2.1}.

\begin{proof}[Proof of Theorem \ref{thm:2.1}(i).]

 Let $u \in \mathcal{F}^k$.
 Then there exist $u_n \in \mathcal V \cap D(\Lambda^k,\mathcal H)$, $n \in \mathbb N$, such that as $n \to \infty$
\begin{align}\label{eq:3.12}
   u_n &\longrightarrow u \text{ in } \mathcal{V} \quad \text{ and } \quad  - \partial_t^{*k} u_n = \Lambda^k u_n \longrightarrow \Lambda^k u \text{ in } \mathcal{V}^*,
\end{align}
 where we used Proposition \ref{corollary:3.2}.
Let $T\in (0, \infty)$.  By \eqref{eq:2.7'}, $k * u_n \longrightarrow k * u$ in $L^{\alpha} ([0, T]; V)$, hence in $L^1([0,T];V^*)$, as $ n \to \infty$ and for $p:=\operatorname{min\,}\{2,\frac{\alpha}{\alpha-1}\}$ the latter part of \eqref{eq:3.12} implies that $\partial_t^{*k}u_n,\ n\in\N$, are bounded in $L^p([0,T];V^*)$. Hence the Cesaro mean of a subsequence of $(\partial_t^{*k}u_n)_{n\in\N}$ converges strongly in $L^p([0,T];V^*)$, hence in $L^1([0,T];V^*)$.
Therefore, by completeness $k * u \in W^{1,1} ((0, T); V^*)$ and
\begin{align*}
   \Lambda^k u = -\frac{d}{dt} (k * u) \ \text{on} \  [0, T]\ \mathrm{d}t\text{-a.e.~}
\end{align*}
The last part of the assertion then follows by \cite[Theorem 1.19, pp.25]{Ba10}.
\end{proof}
To prove Theorem \ref{thm:2.1}(ii) we need some preparations.
\begin{Lemma}\label{lemma:3.3}
Let $\gamma\in(0,\infty)$. Then for all $u\in\mathcal{H}$, $t\geq0$,
\begin{align*}
\int_0^\infty \| U_t^{k}u(s)\|_H^2e^{-\gamma s}\, \mathrm{d}s\leq e^{-\psi^k(\gamma)t}\int_0^\infty\| u(s)\|_H^2e^{-\gamma s}\, \mathrm{d}s.
\end{align*}
\end{Lemma}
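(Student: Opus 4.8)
The plan is to exploit the explicit convolution representation $U_t^k u = u * \mu_t^k$ from \eqref{eq:2.4}, together with the fact that each $\mu_t^k$ is a \emph{probability} measure on $[0,\infty)$ whose Laplace transform is $e^{-t\psi^k(\cdot)}$ by \eqref{eq:3.4}. Extending $u$ by zero to $(-\infty,0)$ as throughout, I would first write, for each $s \in [0,\infty)$,
\begin{align*}
U_t^k u(s) = \int_{[0,\infty)} u(s-r)\, \mu_t^k(\mathrm{d}r),
\end{align*}
and, since $\mu_t^k$ is a probability measure and $x \mapsto \|x\|_H^2$ is convex, apply Jensen's inequality (equivalently Cauchy--Schwarz against the probability measure $\mu_t^k$) to obtain the pointwise bound
\begin{align*}
\|U_t^k u(s)\|_H^2 \le \int_{[0,\infty)} \|u(s-r)\|_H^2\, \mu_t^k(\mathrm{d}r).
\end{align*}

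The second step is to multiply by $e^{-\gamma s}$, integrate in $s$ over $[0,\infty)$, and interchange the order of integration by Tonelli's theorem (the integrand being nonnegative), giving
\begin{align*}
\int_0^\infty \|U_t^k u(s)\|_H^2 e^{-\gamma s}\, \mathrm{d}s \le \int_{[0,\infty)} \left( \int_0^\infty \|u(s-r)\|_H^2 e^{-\gamma s}\, \mathrm{d}s \right) \mu_t^k(\mathrm{d}r).
\end{align*}
For the inner integral, for fixed $r \ge 0$ I substitute $\sigma = s - r$ and use crucially that $u \equiv 0$ on $(-\infty,0)$ to discard the contribution from $\sigma \in [-r,0)$; this yields
\begin{align*}
\int_0^\infty \|u(s-r)\|_H^2 e^{-\gamma s}\, \mathrm{d}s = e^{-\gamma r} \int_0^\infty \|u(\sigma)\|_H^2 e^{-\gamma \sigma}\, \mathrm{d}\sigma.
\end{align*}

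The final step is to recognize the remaining $r$-integral as a Laplace transform: $\int_{[0,\infty)} e^{-\gamma r}\, \mu_t^k(\mathrm{d}r) = \mathcal{L}\mu_t^k(\gamma) = e^{-t\psi^k(\gamma)}$ by \eqref{eq:3.4} (or \eqref{eq:3.6}), since $\gamma \in (0,\infty)$. Combining the three displays gives exactly the claimed inequality. I do not expect any real obstacle here; the argument is a short and clean computation. The only two points that require a moment of care are the use of the zero-extension of $u$ in the change of variables (which is precisely what makes the factor $e^{-\gamma r}$ come out cleanly rather than producing a boundary error term) and the justification of the Hilbert-space-valued Jensen step, which is immediate from $\mu_t^k$ being a probability measure.
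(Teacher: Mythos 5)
Your proposal is correct and follows essentially the same route as the paper's own (very terse) proof: write $U_t^k u = u * \mu_t^k$, apply Jensen's inequality against the probability measure $\mu_t^k$, and conclude via the Laplace transform identity \eqref{eq:3.6}. Your version simply spells out the Tonelli interchange and the zero-extension change of variables that the paper leaves implicit.
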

\begin{proof}
Let $u\in\mathcal{H}$, $t\geq0$. Then
\begin{align*}
 \int_0^\infty\|U_t^{k}u(s)\|_H^2 \, e^{-\gamma s}\,\mathrm{d}s = \int_0^\infty\|(u*\mu_t^k)(s)\|_H^2 \, e^{-\gamma s}\,\mathrm{d}s \leq \mathcal{L}(\|u\|^2_H)(\gamma) \; e^{-t\psi^k(\gamma)},
\end{align*}
where we used Jensen's inequality and \eqref{eq:3.6} in the last step.
\end{proof}

\begin{Lemma}\label{lemma:3.4}
Let $\gamma\in(0,\infty)$ and $u\in D(\Lambda^k,\mathcal{H})$. Then
\begin{align*}
\int_0^\infty\langle\Lambda^ku(s),u(s)\rangle_He^{-\gamma s}\, \mathrm{d}s \leq -\frac12\psi^k(\gamma)\int_0^\infty \|u(s)\|_H^2e^{-\gamma s}\, \mathrm{d}t.
\end{align*}
\end{Lemma}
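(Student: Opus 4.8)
The plan is to exploit the contraction-type estimate of Lemma \ref{lemma:3.3} together with the fact that $\Lambda^k$ is the infinitesimal generator of $(U_t^k)_{t>0}$, via the standard semigroup characterization of dissipativity. Throughout, I would write $\langle f,g\rangle_\gamma := \int_0^\infty \langle f(s),g(s)\rangle_H e^{-\gamma s}\,\mathrm{d}s$ and $\|f\|_\gamma := \langle f,f\rangle_\gamma^{1/2}$ for the inner product and norm of the weighted space $L^2_\gamma([0,\infty);H)$ from \eqref{eq:2.7''}. Since $e^{-\gamma s}\leq 1$ for all $s\geq 0$, one has $\|f\|_\gamma \leq \|f\|_{\mathcal H}$, so that convergence in $\mathcal H$ automatically implies convergence in this weighted space.

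First I would restate Lemma \ref{lemma:3.3} in the form $\|U_t^k u\|_\gamma \leq e^{-\frac12 \psi^k(\gamma) t}\|u\|_\gamma$ for all $t\geq 0$ and $u\in\mathcal H$, which is just taking square roots. Next, for $u\in D(\Lambda^k,\mathcal H)$ the definition of the generator gives $\frac1t(U_t^k u - u)\to \Lambda^k u$ in $\mathcal H$ as $t\to 0$, hence also in $L^2_\gamma([0,\infty);H)$ by the preceding remark. Consequently $\langle \Lambda^k u,u\rangle_\gamma = \lim_{t\to 0}\frac1t\big(\langle U_t^k u,u\rangle_\gamma - \|u\|_\gamma^2\big)$.

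The key estimate then follows by Cauchy--Schwarz in the weighted inner product combined with the rewritten Lemma \ref{lemma:3.3}: for every $t>0$,
\begin{align*}
\langle U_t^k u,u\rangle_\gamma \leq \|U_t^k u\|_\gamma\,\|u\|_\gamma \leq e^{-\frac12\psi^k(\gamma) t}\|u\|_\gamma^2 ,
\end{align*}
so that
\begin{align*}
\frac1t\big(\langle U_t^k u,u\rangle_\gamma - \|u\|_\gamma^2\big) \leq \frac{e^{-\frac12\psi^k(\gamma) t} - 1}{t}\,\|u\|_\gamma^2 .
\end{align*}
Letting $t\to 0$ and using $\lim_{t\to 0} t^{-1}\big(e^{-\frac12\psi^k(\gamma) t}-1\big) = -\frac12\psi^k(\gamma)$ yields the assertion.

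I do not expect a serious obstacle here: this is the classical computation showing that a quasi-contraction semigroup has a dissipative generator. The only points requiring a little care are the passage of the difference-quotient limit from $\mathcal H$ to the weighted space $L^2_\gamma([0,\infty);H)$, which is immediate because the weight is bounded by $1$, and the application of Cauchy--Schwarz with respect to $\langle\cdot,\cdot\rangle_\gamma$ (rather than the pointwise inner product in $H$), which is exactly what preserves the decay rate $\tfrac12\psi^k(\gamma)$ coming from Lemma \ref{lemma:3.3}.
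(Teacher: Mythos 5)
Your proof is correct and follows essentially the same route as the paper's: both pass the generator's difference quotient from $\mathcal{H}$ to the weighted space $L^2_\gamma([0,\infty);H)$ using that the weight is bounded by $1$, then combine the Cauchy--Schwarz inequality in the weighted inner product with Lemma \ref{lemma:3.3} to bound $\langle U_t^k u, u\rangle_\gamma$ by $e^{-\frac{t}{2}\psi^k(\gamma)}\|u\|_\gamma^2$, and conclude by differentiating the exponential at $t=0$. The only difference is presentational (you state the square-rooted form of Lemma \ref{lemma:3.3} explicitly), so there is nothing to add.
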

\begin{proof}
Since
\begin{align*}
\Lambda^ku=\lim_{\varepsilon\rightarrow0}\frac1\varepsilon(U_\varepsilon^ku-u)\quad \text{in $\mathcal{H}$, hence in $L^2_\gamma([0,\infty);H)$,}
\end{align*}
we have by Lemma \ref{lemma:3.3} and the Cauchy--Schwarz inequality
\begin{align*}
&~~ \int_0^\infty\langle \Lambda^k u(s),u(s)\rangle_He^{-\gamma s}\, \mathrm{d}s \\
&= \lim_{\varepsilon\rightarrow 0} \frac1\varepsilon\left[\int_0^\infty\langle U_\varepsilon^ku(s),u(s)\rangle_He^{-\gamma s}\, \mathrm{d}s -\int_0^\infty\langle u(s),u(s)\rangle_He^{-\gamma s}\,\mathrm{d}s \right]
\\ &\leq \lim_{\varepsilon\rightarrow 0}\frac1\varepsilon\left[e^{-\frac\varepsilon 2 \psi^k(\gamma)}-1 \right]\int_0^\infty\|u(s)\|_H^2 e^{-\gamma s}\, \mathrm{d}s
\\ &= -\frac12\psi^k(\gamma)\int_0^\infty\|u(s)\|^2_He^{-\gamma s}\,\mathrm{d}s.
\end{align*}
\end{proof}
Now we can prove the second part of Theorem \ref{thm:2.1}.
\begin{proof}[Proof of Theorem \ref{thm:2.1}(ii).]
Let $u\in\mathcal{F}^k$. By definition of $(\Lambda^k,\mathcal{F}^k)$ there exist $u_n\in D(\Lambda^k,\mathcal{H})\cap\mathcal{V}$ such that as $n\longrightarrow\infty$
\begin{align*}
u_n\longrightarrow u \quad \text{in $\mathcal{V}$} \quad \text{ and } \quad \Lambda^k u_n\longrightarrow\Lambda^ku\quad \text{in $\mathcal{V}^*$.}
\end{align*}
Hence by Lemma \ref{lemma:3.4}
\begin{align*}
\int_0^\infty {}_{V^*}\langle\Lambda^ku(s),u(s)\rangle_Ve^{-\gamma s}\, \mathrm{d}s
& = \lim_{n\rightarrow\infty} \int_0^\infty {}_{V^*}\langle\Lambda^ku_n(s),u_n(s)\rangle_Ve^{-\gamma s}\, \mathrm{d}s
\\& \leq \lim_{n\rightarrow\infty} -\frac12\psi^k(\gamma)\int_0^\infty\|u_n(s)\|^2_He^{-\gamma s}\,\mathrm{d}s
\\& = -\frac12\psi^k(\gamma)\int_0^\infty\|u(s)\|_H^2e^{-\gamma s}\,\mathrm{d}s,
\end{align*}
since $u_n\longrightarrow u$ in $\mathcal{V}$ as $n\rightarrow\infty$, implies that $u_n\longrightarrow u$ in $\mathcal{H}$, hence in $L^2_\gamma([0,\infty);H)$ as $n\rightarrow\infty$. Hence the assertion follows by Theorem \ref{thm:2.1}(i).
\end{proof}

\section{Proof of main existence and uniqueness result: the deterministic case}\label{section:4}

In this section we proof Theorem \ref{thm:2.2}, so assume that \ref{cond:k} and \ref{cond:H1}--\ref{cond:H4} hold.

As in \cite{LRS18} the proof heavily relies on a general perturbation result of operators $\mathcal{A}$ of the type as in Theorem \ref{thm:2.2}, which we briefly recall now.

As in \cite{St99} we consider a generator $\Lambda$, with domain $D(\Lambda, \mathcal H)$, of a $C_0$-contraction semigroup of linear operators on $\mathcal H$ whose restrictions to $\mathcal V$ form a $C_0$-semigroup of linear operators on $\mathcal V$.
The generator of the latter is again $\Lambda$, but with domain $D(\Lambda, \mathcal V) := \{ u \in \mathcal V \cap D(\Lambda, \mathcal H)  \mid \Lambda u \in \mathcal V\}$.
Then $D(\Lambda, \mathcal V)$ is dense in $\mathcal V$, hence so is $D(\Lambda, \mathcal H) \cap \mathcal V$.
By \cite[Lemma 2.3]{St99}, $\Lambda :\mathcal{D }(\Lambda, \mathcal{H}) \cap \mathcal{V} \longrightarrow \mathcal{V}^*$ is closable as an operator from $\mathcal{V}$ to $\mathcal{V}^*$. Denoting its closure by $(\Lambda, \mathcal{F})$ we obtain that $\mathcal{F}$ is a Banach space with norm
\begin{align*}
  {\| u\|}_\mathcal{F} := {(\| u \| ^2_\mathcal{V} +  \| {\Lambda u\| ^2_\mathcal{ V^*})}}^\frac{1}{2}, u\in \mathcal{F}.
 \end{align*}
Now we can formulate the following perturbation result.
\begin{Theorem}\label{thm:4.2}
 Let conditions \ref{cond:H1}--\ref{cond:H4} hold. Assume that in \ref{cond:H2} we have $C_1 = 0$. Then for every $f\in \mathcal{V}^*$ there exists $u \in \mathcal{F}$ such that $\mathcal{A}u - \Lambda u = f$.
\end{Theorem}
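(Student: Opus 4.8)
The plan is to prove surjectivity of $\mathcal{A} - \Lambda \colon \mathcal{F} \to \mathcal{V}^*$ by the standard Browder--Minty strategy for maximal monotone perturbations of coercive operators, treating $-\Lambda$ as a linear monotone operator to be added to the (possibly nonlinear) monotone operator $\mathcal{A}$. Since $C_1 = 0$ in (H2), the map $\mathcal{A}$ is genuinely monotone, and together with (H3)--(H4) it is coercive, bounded, demicontinuous (hemicontinuity plus boundedness), and hence maximal monotone from $\mathcal{V}$ to $\mathcal{V}^*$. The operator $-\Lambda$ is the generator of the $C_0$-contraction semigroup restricted to the Gelfand-triple setting; by the construction in the excerpt it is a densely defined, monotone (dissipativity of $\Lambda$), linear operator from $\mathcal{F} \subset \mathcal{V}$ to $\mathcal{V}^*$. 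The key algebraic fact I would invoke is that the sum of a maximal monotone operator and a linear monotone operator whose adjoint is also suitably behaved is again maximal monotone, so that coercivity forces surjectivity.

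\textbf{Step 1 (Regularization / Yosida approximation).} First I would replace $\mathcal{A}$ by $\mathcal{A} + \varepsilon J$, where $J \colon \mathcal{V} \to \mathcal{V}^*$ is the duality map (or a fixed coercive reference operator adapted to the norm of $\mathcal{V}$), making the perturbed operator strongly monotone and strictly coercive for each $\varepsilon > 0$. This avoids degeneracy and lets me solve the approximate equation
\begin{align*}
  \mathcal{A} u_\varepsilon + \varepsilon J u_\varepsilon - \Lambda u_\varepsilon = f.
\end{align*}
Alternatively, one can approximate $-\Lambda$ by its Yosida approximation $-\Lambda_\lambda = -\lambda \Lambda (\lambda - \Lambda)^{-1}$, which is bounded, monotone, and everywhere defined on $\mathcal{V}$; then $\mathcal{A} - \Lambda_\lambda \colon \mathcal{V} \to \mathcal{V}^*$ is a bounded, demicontinuous, monotone, coercive operator on the whole reflexive space $\mathcal{V}$, so Browder's theorem gives a solution $u_\lambda \in \mathcal{V}$ of $\mathcal{A} u_\lambda - \Lambda_\lambda u_\lambda = f$ directly.

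\textbf{Step 2 (A priori estimates).} Testing the approximate equation against $u_\lambda$ and using coercivity (H3) together with the dissipativity of $\Lambda$ (so that ${}_{\mathcal{V}^*}\langle -\Lambda_\lambda u_\lambda, u_\lambda \rangle_{\mathcal{V}} \geq 0$) yields a uniform bound on $\|u_\lambda\|_{\mathcal{V}}$. The growth condition (H4) then bounds $\|\mathcal{A} u_\lambda\|_{\mathcal{V}^*}$, and rearranging the equation bounds $\|\Lambda_\lambda u_\lambda\|_{\mathcal{V}^*}$ uniformly. By reflexivity of $\mathcal{V}$ and $\mathcal{V}^*$ I extract a subsequence with $u_\lambda \rightharpoonup u$ weakly in $\mathcal{V}$, $\mathcal{A} u_\lambda \rightharpoonup \eta$ weakly in $\mathcal{V}^*$, and $\Lambda_\lambda u_\lambda \rightharpoonup \xi$ weakly in $\mathcal{V}^*$.

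\textbf{Step 3 (Identification of limits — the main obstacle).} The hard part will be identifying $\xi = \Lambda u$ and $\eta = \mathcal{A} u$. For the linear term I would use that $\Lambda$ is closed from $\mathcal{V}$ to $\mathcal{V}^*$ (the closability from \cite[Lemma 2.3]{St99}, whose closure defines $\mathcal{F}$) together with the convergence properties of the Yosida approximation $\Lambda_\lambda u_\lambda \to \Lambda u$ in the graph sense, so that $u \in \mathcal{F}$ and $\xi = \Lambda u$. For the nonlinear term I would apply the monotonicity trick (Minty's device): from monotonicity of $\mathcal{A} - \Lambda_\lambda$ one obtains $\limsup_\lambda {}_{\mathcal{V}^*}\langle \mathcal{A} u_\lambda, u_\lambda \rangle_{\mathcal{V}} \leq {}_{\mathcal{V}^*}\langle f + \xi, u \rangle_{\mathcal{V}}$, and combining this with hemicontinuity (H1) and the standard monotone-limit argument gives $\eta = \mathcal{A} u$. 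The delicate point is controlling the cross terms between $\mathcal{A} u_\lambda$ and $\Lambda_\lambda u_\lambda$ in the $\limsup$, since the two weak limits must be disentangled; this is where the linearity and the monotone (dissipative) sign of $\Lambda$ are essential, because the mixed term ${}_{\mathcal{V}^*}\langle -\Lambda_\lambda u_\lambda, u_\lambda\rangle_{\mathcal{V}}$ has a definite sign and can be handled via lower semicontinuity. Passing to the limit in the equation then yields $\mathcal{A} u - \Lambda u = f$ with $u \in \mathcal{F}$, completing the proof.
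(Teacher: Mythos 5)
Your proposal, in its Yosida-approximation variant, is essentially the paper's own proof: solve $\mathcal{A}u_\alpha-\Lambda_\alpha u_\alpha=f$ by Br\'ezis's surjectivity theorem for pseudo-monotone, bounded, coercive operators (Lemma \ref{lemma:a.2}), get uniform bounds on $\|u_\alpha\|_{\mathcal{V}}$, $\|\mathcal{A}u_\alpha\|_{\mathcal{V}^*}$, $\|\Lambda_\alpha u_\alpha\|_{\mathcal{V}^*}$ from \ref{cond:H3}--\ref{cond:H4} and the sign of ${}_{\mathcal{V}^*}\langle \Lambda_\alpha u_\alpha,u_\alpha\rangle_{\mathcal{V}}$, pass to weak limits, identify the linear limit through the closedness of $(\Lambda,\mathcal{F})$ together with the resolvent convergence $\alpha V_\alpha u_\alpha\rightharpoonup u$, and identify $\mathcal{A}u$ by the Minty/pseudo-monotonicity device with the dissipative cross term. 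The one caveat is that the ``key algebraic fact'' you announce in your plan (maximal monotonicity of the sum) is precisely what the paper's Remark \ref{remark:a.2}(iii) says is unavailable here, since $-\Lambda\colon\mathcal{F}\subset\mathcal{V}\to\mathcal{V}^*$ need not be maximal monotone; but your Steps 1--3 never actually use that fact, so the argument stands as written.
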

This result is a generalization of \cite[Proposition 3.2]{St99}. We replace the strong monotonicity assumption in \cite[Proposition 3.2]{St99} by the classical monotonicity, i.e.~ \ref{cond:H2} with $C_1=0$, and consider a reflexive Banach space $\mathcal V$, while this space was assumed to be a Hilbert space in \cite{St99}. A rather concise proof in this more general case was given in \cite{LRS18}. Since this result is crucial for Theorem \ref{thm:2.2} and for the convenience of the reader we include a more detailed proof in the Appendix of this paper. Now we are prepared to prove the second main result of this paper.

\begin{proof}[Proof of Theorem \ref{thm:2.2}(i)] Existence: \\
 \underline{Case 1:} $u_0 = 0$.\\
 Consider the operator
 \begin{align*}
 \tilde{A} := A + C_1I,
 \end{align*}
where $I\colon V \longrightarrow V^*$, $I(u):=u, u \in V$, and let $\tilde {\mathcal{A}}$ be defined as $\mathcal{A}$ was for $A$. Then we can apply Theorem \ref{thm:4.2} with $\mathcal{A}$ replaced by $\tilde{\mathcal{A}}$ and $\Lambda$ replaced by $\Lambda^k=-\partial^{*k}_t$ with domain $\mathcal{F}^k$ (see Theorem \ref{thm:2.1}(i)). Hence for every $g\in \mathcal{H}(\subset\mathcal{V}^*)$ there exists $u_g\in \mathcal{F}^k$ such that
\begin{equation}\label{eq:4.1}
 \partial^{*k}_t u_g + \mathcal{A} u_g + C_1 u_g = g + f \quad\text{ in } \mathcal{V^*}.
\end{equation}
Define: $\mathcal{H}_T:=L^2([0,T];H)$. Then $\mathcal{H}_T\hookrightarrow \mathcal{H} $ by the map $i(g)=\begin{cases}g\text{ on }[0,T]\\ 0\text{ on }(T, \infty)\end{cases}$.\\
Consider the map $\mathcal{H} \supset \mathcal{H}_T \ni g \mapsto C_1 u_{g\upharpoonright[0,T]} \in \mathcal{H}_T $, where for a function $h \colon [0, \infty) \longrightarrow H$ we denote its restriction to $[0,T]$ by $h_{\upharpoonright[0,T]}$.

By \ref{cond:H2} and Theorem \ref{thm:2.1}(ii) we have for all $\gamma \in (0, \infty);\ g_1, g_2 \in \mathcal{H}_T$
\begin{align*}
&~~~~ \frac{1}{2}\psi^k(\gamma) \int^T_0  \| C_1 u_{g_1} (s)- C_1u_{g_2} (s) \|^2_H e^{-\gamma s}\,\mathrm{d}s
 \\ & \leq C^2_1 \int^\infty_0 {}_V\langle u_{g_1} (s) - u_{g_2}(s), \partial^{*k}_t (u_{g_1} - u_{g_2}) + A(s, u_{g_1} (s)) - A(s, u_{g_2}(s))
 \\ &\hspace{207pt} + C_1 (u_{g_1} (s) - u_{g_2} (s)) \rangle_{V^*} e^{-\gamma s}\,\mathrm{d} s\\
  &=C_1\int^T_0 {\langle C_1 u_{g_1} (s) - C_1 u_{g_2} (s), g_1(s)-g_2(s) \rangle}_H e^{-\gamma s} \,\mathrm{d}s.
 \end{align*}
Hence by the Cauchy--Schwarz inequality
\begin{equation*}
 {\left(\int^T_0 \| C_1 u_{g_1} (s) - C_1 u_{g_2} (s) \|^2_H e^{-\gamma s}\,\mathrm{d}s\right)}^\frac{1}{2} \leq \frac{2C_1}{\psi^k (\gamma)} {\left(\int^T_0 \| g_1 (s) - g_2 (s)\|^2_H e ^{-\gamma s}\,\mathrm{d}s\right)}^\frac{1}{2}.
\end{equation*}
We recall that by assumption
\begin{align*}
 \frac{2C_1}{\psi^k (\gamma)}< 1,
\end{align*}
which can always be achieved for large enough $\gamma$ by \eqref{eq:2.9}, if $M^k((0,\infty)) = \infty$, i.e.~if $\lim\limits_{s \to 0} k(s) = \infty$.
Hence by Banach's fixed point theorem there exists $g \in \mathcal{H}_T$
\begin{align*}
C_1 u_g = g\quad \mathrm{d}t\text{-a.e.~on }[0, T].
\end{align*}
But then by \eqref{eq:4.1}
\begin{align*}
\partial^{*k}_t u_g(t) + A(t, u_g (t)) = f(t)\quad \text{for }\mathrm{d}t\text{-a.e.~} t \in [0,T],
\end{align*}
so \eqref{eq:2.1} holds for $u_0 = 0$. Furthermore by construction $u_g \in \mathcal{F}^k$. In particular, \eqref{eq:2.10} holds for $u_0=0$.\\
\underline{Case 2:} $u_0 \in V$.\\
Let $\varphi$ be as in the assertion of the Theorem. Set $x:=u_0$ and define $\mathcal{A}_x$ as $\mathcal{A}$, but with
\begin{align*}
 A_x (t, v) := A (t, v + x \varphi (t)), \ t > 0, \ v \in V,
\end{align*}
replacing $A$. Then by Case 1 there exist $u_x \in \mathcal{F}^k$ such that
\begin{align*}
 \frac{d}{dt} (k * u_x(t)) + A_x(t,u_x(t)) = f(t)\quad \text{for } \mathrm{d}t\text{-a.e.~} t\in[0,T].
\end{align*}
Define $u := u_x + x \varphi$. Then $u - x \varphi (= u_x)$ satisfies \eqref{eq:2.10} and
\begin{align*}
 \partial^{*k}_t (u(t) - x) + A(t,u(t)) = f(t) \quad dt \text{-a.e.~on }  (0, T)
\end{align*}
and \eqref{eq:2.1} is solved.

The last part of assertion (i) of Theorem \ref{thm:2.2} follows by the last part of Theorem \ref{thm:2.1}(i).\\
Uniqueness: Let $u_1, u_2$ be two solutions of \eqref{eq:2.1} on $[0, T]$ such that $u_1- \varphi u_0, u_2 - \varphi u_0 \in \mathcal{F}^k$ with $\varphi$ as in the assertion. Then $u_1- u_2 \in \mathcal{F}^k$ and by Theorem \ref{thm:2.1}(ii)
\begin{align*}
 0 &= \int^\infty_0 {}_V \langle u_1(s) - u_2(s), \partial^{*k}_t (u_1(s) - u_2(s)) + A(s,u_1 (s))-A(s,u_2(s)\rangle_{V^*} e^{-\gamma s}\, \mathrm{d}s\\
 &\geq \left(\frac{1}{2} \psi^k(\gamma) - C_1\right) \int^\infty_0 \| u_1(s)-u_2(s)\|^2_{H} e^{-\gamma s}\,\mathrm{d}s\\
 &\geq 0,
\end{align*}
since by assumption $\psi^k(\gamma) > 2C_1$. Hence $u_1 = u_2$.\\
\end{proof}
\begin{proof}[Proof of Theorem \ref{thm:2.2}(ii).] That under assumption \ref{cond:k2} equation \eqref{eq:2.1} can be rewritten as \eqref{eq:2.11} was already explained in  Section \ref{section:2}  of this paper. The last part of the assertion is an elementary fact about convolutions in Lebesgue $L^p$-spaces.
\end{proof}

\section{Proof of the stochastic case}\label{section:5}
The proof of Theorem \ref{thm:2.3} follows from Theorem \ref{thm:2.2} by a simple shift argument (cf.~\cite{LRS18}).

\begin{proof}[Proof of Theorem \ref{thm:2.3}]
Let $u(t)=X(t)-F(t)$, then $u(t)$ satisfies the following equation
\begin{equation}\label{eq:5.1}
 \partial_t^{*k_1}(u(t)-x)+A(t,u(t)+F(t))=0,  \quad 0<t<T.
\end{equation}
Define
$$ \tilde{A}(t,u)=A(t,u+F(t)), u\in V. $$
Since $F\in  V$ $\mathrm{d}t\otimes\mathbb{P}$-a.e., it is easy to see that $\tilde{A}$ still satisfies \ref{cond:H1}--\ref{cond:H4}.
Hence assertion $(i)$ follows by Theorem \ref{thm:2.2}$(i)$. Assertion $(ii)$ is then proved analogously to Theorem \ref{thm:2.2}$(ii)$.

The $(\mathcal{F}_t)$-adaptedness of the solution follows by the proofs of Theorem \ref{thm:4.2} and Lemma \ref{lemma:a.2}.
The last two assertions are obvious.
\end{proof}

\section{Examples of Kernels}\label{section:6}
In this section we give some examples of kernels $k$ which satisfy both condition \ref{cond:k} and \ref{cond:k2} needed to apply Theorems \ref{thm:2.1}, \ref{thm:2.2}
and \ref{thm:2.3} in Section \ref{section:2}.
\begin{Example}[Fractional Caputo derivative]
\label{exa:Caputo-derivative}Let $0<\beta<1$ be given and define
the function $k$ on $[0,\infty)$ by
\[
k(t):=g_{1-\beta}(t)=\frac{t^{-\beta}}{\Gamma(1-\beta)},\quad t\in[0,\infty).
\]
Then $k$ is nonnegative, nonincreasing function on $[0,\infty)$
and we have $\lim_{t\to0}k(t)=\infty$ and $\lim_{t\to \infty }k(t)=0$.
It is well known that $\partial_{t}^{*k}(f-f(0))$ corresponds to
the Caputo derivative of $f$ and the problem stated in \eqref{eq:1.1}
has been treated in \cite{LRS18}. The associated L{\'e}vy measure
is absolutely continuous with respect to the Lebesgue measure and
is given by
\begin{equation}
M_{\beta}^{k}(\mathrm{d}t)=\frac{\beta}{\Gamma(1-\beta)}t^{-(1+\beta)}\,\mathrm{d}t.\label{eq:Levy-mesaure1}
\end{equation}
It is simple to verify that $k$ satisfies (k) and the corresponding
$\tilde{k}\in L_{\mathrm{loc}}^{1}([0,\infty))$ is given by
\[
\tilde{k}(t)=\frac{t^{\beta-1}}{\Gamma(\beta)},\quad t\in[0,\infty).
\]
Hence condition ($\tilde{\textrm{k}}$) is satisfied. The pair $(k,\tilde{k})$
is called Sonine kernels and $(\tilde{k}*k)(t)=1$, $t\in[0,\infty)$
is known as Sonine condition, see \cite{Sonine1884} and \cite{Samko2003}
for a survey.
\end{Example}

\begin{Example}[Truncated $\beta$-stable subordinator, cf.~Example 2.1-(ii) in \cite{Che17}]
A process $S(t)$, $t\ge0$ is called truncated $\beta$-stable subordinator
if it is driftless and its L{\'e}vy measure is
\[
M_{\delta}^{k}(\mathrm{d}x):=\frac{\beta}{\Gamma(1-\beta)}x^{-(1+\beta)}1\!\!1_{(0,\delta]}(x)\,\mathrm{d}x,\qquad\delta>0.
\]
The kernel $k$ defined by
\[
k(t):=M_{\delta}^{k}((t,\infty))=\frac{\beta}{\Gamma(1-\beta)}1\!\!1_{(0,\delta]}(t)\int_{t}^{\delta}x^{-(1+\beta)}\,\mathrm{d}x=\frac{1\!\!1_{(0,\delta]}(t)}{\Gamma(1-\beta)}(t^{-\beta}-\delta^{-\beta})
\]
induces the following generalized time-fractional derivative
\[
\partial_{t}^{*k}(f-f(0))(t)=\frac{1}{\Gamma(1-\beta)}\frac{d}{dt}\int_{(t-\delta)^{+}}^{t}\big((t-s)^{-\beta}-\delta^{-\beta}\big)(f(s)-f(0))\,\mathrm{d}s.
\]
Here for $a\in\mathbb{R}$, $a^{+}:=\max\{a,0\}$. This is the generalized
time-fractional derivative whose value at time $t$ depends only on
the $\delta$-range of the past of $f$ in contrast to the usual case
which depends on the history of $f$ on $(0,t)$. Notice that $\lim_{\delta\to0} k(t)=\frac{1}{\Gamma(1-\beta)}t^{-\beta}$. We have $\lim_{t\to0} k(t)=\infty$
and $\lim_{t\to\infty} k(t)=0$. Hence, $k$ satisfies condition
\ref{cond:k}, but also \ref{cond:k2}, because $M_{\delta}^{k}$ is absolutely continuous with respect to
the Lebesgue measure. Hence the existence of the kernel $\tilde{k}$ follows from the theory of complete Bernstein
functions, see Theorem 6.2
in \cite{SSV12}.
\end{Example}

\begin{Example}[Distributed order derivative]
\label{exa:distr-order-deriv}Let $g_{\beta}$ as in (\ref{eq:1.3})
and define the kernel $k$ by
\[
k(t):=\int_{0}^{1}g_{\beta}(t)\,\mathrm{d}\beta,\quad t\ge0.
\]
The corresponding generalized time-fractional derivative is called
\emph{distributed order derivative} and it may be written as
\[
\partial_{t}^{*k}(f-f(0))(t)=\int_{0}^{1}\partial_{t}(k*(f-f(0))(t)\,\mathrm{d}\beta.
\]
The kernel $k$ is a nonincreasing, nonnegative function on $[0,\infty)$
which belongs to $L_{\mathrm{loc}}^{1}([0,\infty))$. Moreover, $\lim_{t\to0}k(t)=\infty$
and $\lim_{t\to\infty}k(t)=0$. The associated nonnegative kernel
$\tilde{k}$ such that $\tilde{k}*k=1$ has the form
\[
\tilde{k}(t)=\int_{0}^{\infty}\frac{e^{-st}}{1+s}\,\mathrm{d}s
\]
and we have $\tilde{k}\in L_{\mathrm{loc}}^{1}([0,\infty))$, so condition
($\tilde{\textrm{k}}$) is satisfied.
\end{Example}

\begin{Example}[Exponential weight]
\label{exa:power-exp}For any $\gamma\ge0$,  $\lambda>0$  and $0<\beta<1$ define
the kernel $k$ by
\[
k(t):=g_{1-\beta}(t)e^{-\lambda t}=\yx\frac{t^{-\beta}}{\Gamma(1-\beta)}\xy e^{-\lambda t}.
\]
The kernel $k$ is nonnegative, nonincreasing and $k\in L_{\mathrm{loc}}^{1}([0,\infty))$,
hence $k$ satisfies condition (k). We have $\lim_{t\to0}k(t)=\infty$
and $\lim_{t\to\infty}k(t)=0$. The associated nonnegative $\tilde{k}$
such that $\tilde{k}*k=1$ is given by
\[
\tilde{k}(t)=\gamma^{\beta}+\frac{\beta}{\Gamma(1-\beta)}\int_{t}^{\infty}\frac{e^{-\gamma s}}{s^{1+\beta}}\,\mathrm{d}s,\quad t\in[0,\infty).
\]
The fact that $\tilde{k}*k=1$ may be checked by applying the Laplace
transform to both sides of the equation. Moreover, a simple integration
shows that $\tilde{k}\in L_{\mathrm{loc}}^{1}([0,\infty))$, hence
condition \ref{cond:k2} is satisfied.
\end{Example}

\begin{Example}[Gamma subordinator]
\label{exa:gamma-subordinator}Let $a,b>0$ be given and $k$ the
kernel defined by
\[
k(t):=a\Gamma(0,bt),\;t\in[0,\infty),
\]
where $\Gamma(\nu,x):=\int_{x}^{\infty}t^{\nu-1}e^{-t}\,dt$ is the
upper incomplete gamma function. It follows from the properties of
$\Gamma(\nu,x)$ that $k$ is a locally integrable, nonnegative, nonincreasing
function on $[0,\infty)$ and we have $\lim_{t\to0}k(t)=\infty$ and
$\lim_{t\to\infty}k(t)=0$. Hence, $k$ satisfies condition (k). The
kernel $k$ is related to the gamma subordinator (see for example
\cite[Ch.~III]{Bertoin96}) through its Laplace transform, namely
the process with Laplace exponent equal to
\[
\lambda\int_{0}^{\infty}e^{-\lambda t}k(t)\,\mathrm{d}\tau=a\log\left(1+\frac{\lambda}{b}\right)=a\int_{0}^{\infty}(1-e^{-\lambda t})t^{-1}e^{-bt}\,\mathrm{d}t,\quad\lambda>0,
\]
where the second equality stems from the Frullani integral. Hence,
the L{\'e}vy measure is $M_{a,b}^{k}(\mathrm{d}t)=at^{-1}e^{-bt}\,\mathrm{d}t.$
The existence of a positive $\tilde{k}\in L_{\mathrm{loc}}^{1}([0,\infty))$
such that $\tilde{k}*k=1$ is a consequence of the fact that $M_{a,b}^{k}$
is absolutely continuous with respect to the Lebesgue measure and
the theory of complete Bernstein functions, see Theorem 6.2 in \cite{SSV12}. Hence condition \ref{cond:k2} is satisfied.
\end{Example}

\begin{Example}[Multi-term derivative]
Let $0<\beta<1$ and $0<\alpha<1$ be given. Define the kernel $k$
by
\[
k(t):=g_{1-\beta}(t)+g_{1-\alpha}(t),\quad t>0.
\]
The kernel $k$ is completely monotone, that is $k\in C^{\infty}((0,\infty))$
and $(-1)^{n}k^{(n)}(t)\ge0$ for all $t>0$ and $n\in\mathbb{N}\cup\{0\}$.
The corresponding generalized time-fractional derivative $\partial_{t}^{*k}$
is called \emph{multi-term fractional derivative}. We have $\lim_{t\to0}k(t)=\infty$
and $\lim_{t\to\infty}k(t)=0$. It follows from Example \ref{exa:Caputo-derivative}
that the L{\'e}vy measure $M_{\beta,\alpha}^{k}$ defining $k$ is
the sum of two L{\'e}vy measures of the type (\ref{eq:Levy-mesaure1}).
It follows from Theorem 5.5 and Corollary 5.6 of \cite{Gripenberg2019}
that there exists a nonnegative kernel $\tilde{k}\in L_{\mathrm{loc}}^{1}([0,\infty))$
such that $\tilde{k}*k=1$ and its Laplace transform is
\[
\mathcal{L}\tilde{k}(\lambda)=\frac{1}{\lambda^{\alpha}+\lambda^{\beta}}.
\]
Hence, the kernel $k$ satisfies both conditions \ref{cond:k} and \ref{cond:k2}. This example may be generalized
to kernels $k(t):=\sum_{j=1}^{n}a_{j}\frac{t^{-\beta_{j}}}{\Gamma(1-\beta_{j})}$
with $a_{j}>0$ and $0<\beta_{1}<\ldots<\beta_{n}<1$.
\end{Example}

\section{Applications to quasi-linear (S)PDE}\label{section:7}

In this section we apply  Theorems  \ref{thm:2.2} and \ref{thm:2.3}  to   (stochastic) generalized porous medium equations, (stochastic) generalized $p$-Laplace equations, and  (stochastic) generalized fast-diffusion equations (cf. \cite{BR15,LR15}) with time-fractional derivative. Here for simplicity we mainly concentrate on the deterministic case, the extension to the stochastic case is straightforward.

\subsection{Generalized porous medium equations}
We introduce the model as in \cite{RRW}. Let $(E,\mathcal{B},{\m})$ be a separable $\sigma$-finite measure space  and
$(L,\D(L))$ a negative definite self-adjoint linear operator on
$L^2({\bf m})$ having
 discrete spectrum.  Let $$
  (0<) \ll_1\le \ll_2\le \cdots
$$ be all eigenvalues of $-L$ including multiplicities with unit eigenfunctions $\{e_i\}_{i\ge 1}$.
Let $H$ be the dual space of the $\D((-L)^{\ff 12})$ with respect to $L^2(\m)$; i.e. $H$ is the  completion of $L^2(\m)$ under the inner
product
$$\<x,y\>:= \sum_{i=1}^\infty \ff 1 {\ll_i}
\m(xe_i)\m(ye_i),$$ where $\m(x):= \int_E x\d\m$ for $x\in L^1(\m).$    Let
$$\Psi, \Phi: [0,\infty)\times \R \to \R$$
be   measurable,  and be continuous in the second variable.
   We consider the following  generalized porous medium equation  with generalized time-fractional derivative
\begin{equation}\label{PME}
\partial_t^{*k} (X_t-x_0) = L\Psi(t,X_t)+ \Phi(t,X_t).\end{equation}

To verify conditions $(H1)$, $(H2)$, $(H3)$ and $(H4)$ for $A(t,v):= L \Psi(t,v)+ \Phi(t,v),$ we assume
that for a fixed constant $r\ge 1$,
\begin{equation}\label{7.1}\begin{split} & |\Psi(t,s)|+|\Phi(t,s)|\le c(1+|s|^r), \ \ s\in\R, t\ge 0,\\
& -\m\big((\Psi(t,x)-\Psi(t,y))( x-y)\big) +\m\big((\Phi(t,x)-\Phi(t,y)) (-L)^{-1} (x-y)\big) \\
&\quad \le
K\|x-y\|_H^2-\delta \|x-y\|_{r+1}^{r+1},\ \  t\ge 0,\end{split} \end{equation} hold for some constants $c, K,\delta>0$ and all $x,y\in L^{r+1}(\m),$
where $\|\cdot\|_{r+1}$ is the norm in $L^{1+r}(\m).$  Obviously, the assumptions above are satisfied provided
$\Psi(t,s)= h(t) |s|^{r-1}s$ and $\Phi(t,s)= g(t) s$, $t\in[0,T]$, $s\in\R$, with
$0<\inf h\le \sup h<\infty$ and $ \|g\|_\infty<\infty$.

~~

\noindent\textbf{Example  7.1}  Let $V=L^{1+r}(\m)$ and $V^*$ be the dual space of $V$ with respect to $H$. Then it is easy to see that \eqref{7.1} implies that
$(H1)$, $(H2)$, $(H3)$ and $(H4)$ hold for (see \cite[page 137]{RRW})
$$A(t,v):= L\Psi(t,v) +\Phi(t,v).$$   Therefore,  Theorem \ref{thm:2.2} is applicable to the  time-fractional  generalized porous medium equation (\ref{PME}) if $k$ satisfies \ref{cond:k}, \ref{cond:k2} respectively.
\begin{Remark} (i) Let $r>1$ and $\DD$ be the Dirichlet Laplacian on an open  domain $D\subset \R^d$. Let $L= \DD$ if $D$ is bounded and, in addition,  $r\le \frac{2d}{d+2}$, or  $L= -(-\DD)^\alpha$  for some constant $\alpha \in (0, \frac{d}{2}) \cap (0, 1]$ if $D= \R^d$ (the definition of $V$ and $H$ should be revised in the latter case, see \cite{RRW}). Let   $$\Phi(t,s)=cs,\ \  \Psi(t,s)=s|s|^{r-1}, $$ for some constant $c\in\R$ (see \cite[Example 3.4]{RRW} for possible more general cases).
 Then the assertions in Theorem \ref{thm:2.2} hold.

 (ii) Similarly, we could apply Theorem \ref{thm:2.3} to investigate the  time-fractional  stochastic
 generalized porous medium equation
 \begin{equation}\label{SPME}
\partial_t^{*k_1} (X_t-x_0) = \big\{L\Psi(t,X_t)+ \Phi(t,X_t)\big\}\d t
+ \partial_t^{*k_2} \int_0^t  B(s)\d W(s),\end{equation} where $W(s)$, $s\geq0$,  is cylindrical Brownian motion on $H$ and $B: [0,\infty) \to L_{HS}(H)$ is measurable and locally bounded.
 \end{Remark}
\subsection{Stochastic generalized $p$-Laplace equations}
Let $D\subset \R^d$ be an open bounded domain,  $\m$ be the normalized volume measure on $D$,  and  $p\in[2,\infty)$. Let $H_0^{1,p}(D)$ be the closure of $C_0^\infty(D)$ with respect to the norm
$$\|f\|_{1,p}:= \|f\|_p  +\|\nn f\|_p,$$ where $\|\cdot\|_p$ is the norm in $L^p(\m)$.  Let $H=L^2(\m)$ and $V=H_0^{1,p}(D)$. By the Poincar\'e inequality, there exists a constant $C>0$ such that
$\|f\|_{1,p}\le C \|\nabla f\|_{p}.$ Now we consider the following  time-fractional  generalized $p$-Laplace equations
\begin{equation}\label{PLE}
 \partial_t^{*k} (X_t-x_0) =  {\rm div}\left(\Phi(t, \nabla X_t)\right) + f(t,X_t) ,
\end{equation}
where
$$\Phi: [0,\infty)\times \R^d \to \R^d;  ~  f: [0,\infty)\times \R \to \R  $$
are measurable,  and continuous in the second variable.

To verify conditions $(H1)$, $(H2)$, $(H3)$ and $(H4)$ for $A(t,v):= {\rm div}\big(\Phi(t, \nabla v)\big) +f(t,v)$, we assume
that for a fixed $p\in[2,\infty)$,
\begin{equation}\label{7.2}\begin{split} & |\Phi(t,s)|\le K(1+|s|^{p-1}), \ \ s\in\R^d, t\ge 0,\\
&  \m\big((\Phi(t,x)-\Phi(t,y)) (x-y)\big)  \ge
\delta \|x-y\|_{p}^{p},\ \  t\ge 0, \\
&   \m\big((f(t,x)-f(t,y)) (x-y)\big)  \le
K \|x-y\|_{2}^{2},  \  t\ge 0, \\
& |f(t,x)| \le K (1+ |x|^{p-1}),  \ t\ge 0,
\end{split} \end{equation} hold for some constants $K, \delta>0$ and all $x,y\in L^{p}(\m)$.

~~

\noindent\textbf{Example  7.2} Suppose that \eqref{7.2} holds, then $(H1)$, $(H2)$, $(H3)$ and $(H4)$ hold for (see e.g. \cite[Example 4.1]{G13})
$$ A(t,v):= {\rm div}\left(\Phi(t, \nabla v)\right) +f(t,v). $$
 Therefore,  Theorem \ref{thm:2.2} is applicable to the  time-fractional  generalized $p$-Laplace equations (\ref{PLE}), if $k$ satisfies \ref{cond:k}, \ref{cond:k2} respectively.
\begin{Remark} (i)
Obviously, the assumptions above are satisfied provided
$\Phi(t,s)= h(t) |s|^{p-2}s $ and $f(t,s)= f_1(t) s -f_2(t)|s|^{p-2}s$, $t\in[0,T]$, $s\in\R$, with
$0<\inf h\le \sup h<\infty$ and $ \|f_i\|_\infty<\infty$, which is the classical $p$-Laplace equation with polynomial type perturbation.

(ii) Similarly, we could apply Theorem \ref{thm:2.3} to the following time-fractional stochastic generalized $p$-Laplace equations
\begin{equation}\label{SPLE}
 \partial_t^{*k_1} (X_t-x_0) = \left( {\rm div}\left(\Phi(t, \nabla X_t)\right) + f(t,X_t) \right) \d t + \partial_t^{*k_2} \int_0^t B(s)\d W(s),
\end{equation}
where $W(s)$, $s\geq0$, is cylindrical Brownian motion on $H$,  $B: [0,\infty)  \to L_{HS}(H)$ is measurable and locally bounded.
\end{Remark}
\subsection{Stochastic generalized fast-diffusion equations}
Let $(E,\mathcal{B},\m), (L,\D(L)), H$ ($B$ and $W(s)$, $s\geq0$,) be as in Example 7.1.
Suppose that $r\in (0,1)$ and $\Psi: [0,\infty)\times \R\to\R$ is measurable, continuous in the second variable and such that for some constant $\delta>0$,
\begin{eqnarray}
 && \big(\Psi(t,s_1)-\Psi(t,s_2)\big)(s_1-s_2)\ge \ff{ \delta |s_1-s_2|^2}{(|s_1|\lor |s_2|)^{1-r}},  \ \ \ s_1,s_2\in \R, t\ge
0,\label{CC1}\\
&& s\Psi(t,s)\ge \delta |s|^{r+1},\ \ \sup_{t\in [0,T],s\ge 0}\ff{|\Psi(t,s)|}{1+|s|^r}<\infty,\ \  s \in \R, t\ge
0,\label{CC2} \end{eqnarray} where  $\ff{|s_1-s_2|^2}{(|s_1|\lor |s_2|)^{1-r}}:=0$ for $s_1=s_2=0.$

We consider the following  time-fractional  generalized fast-diffusion equations
\begin{equation}\label{FD}
\partial_t^{*k}( X(t)-x_0) = L\Psi(t,X(t))+ h(t) X(t) ,\end{equation}
where  $ h \in C([0,\infty))$.

Let $V=L^{r+1}(\m)\cap H$ with $\|v\|_{V}:= \|v\|_{1+r} + \|v\|_{H}$. Then it is easy to show that
 $(H1)$-$(H4)$ hold for (see \cite[Theorem 3.9]{RRW} for a more general result)
 $$A(t,v):= L \Psi(t,v)+ h(t) v,\ \ \ v\in  V.$$

\noindent\textbf{Example 7.3} Suppose that $(\ref{CC1})$ and $(\ref{CC2})$ hold, then the assertions in Theorem  \ref{thm:2.2} hold for (\ref{FD}), if $k$ satisfies \ref{cond:k}, \ref{cond:k2} respectively.
\begin{Remark}
 (i) By the mean-valued
theorem,  one has  for $r\in (0,1)$
$$(s_1-s_2)(s_1|s_1|^{r-1}-s_2|s_2|^{r-1})\ge
r |s_1-s_2|^2(|s_1|\lor |s_2|)^{r-1},\ \ s_1,s_2\in\R.$$ So, a  simple example of $\Psi$, so that \eqref{CC1} and \eqref{CC2} hold, is  $\Psi(t,s)= c\,s|s|^{r-1}$  for some constant $c>0$. This corresponds to the classical fast-diffusion equation.

(ii) Similar results also hold for the corresponding  time-fractional  stochastic equations
\begin{equation}
\partial_t^{*k_1}( X(t)-x_0) = \Big\{L\Psi(t,X(t))+ h(t) X(t)\Big\}\d t +\partial_t^{*k_2} \int_0^t B(s)\d W(s).\end{equation}
\end{Remark}
\appendix
\noindent
\section{Appendix A.~ Proof of Theorem \ref{thm:4.2}}
For the proof of Theorem \ref{thm:4.2} we need some preparations.
We recall the definition of a pseudo-monotone operator, which is a very useful generalization of monotone operator and was first introduced by Br\'{e}zis in \cite{Br68}.
We use the  notation ``$\rightharpoonup$'' for  weak convergence in Banach spaces.

\begin{Definition}\label{definition:a.1} An operator $M: \mathcal{V}\longrightarrow \mathcal{V}^*$ is called pseudo-monotone  if $v_n\rightharpoonup v$ in $\mathcal{V}$
as $n\rightarrow \infty$ and
$$     \limsup_{n\rightarrow\infty} { }_{\mathcal{V}^*}\<M(v_n), v_n-v\>_{\mathcal{V}}\le 0 $$
implies for all $u\in \mathcal{V}$
$$   { }_{\mathcal{V}^*}\<M(v), v-u\>_{\mathcal{V}} \le  \liminf_{n\rightarrow\infty} { }_{\mathcal{V}^*}\<M(v_n), v_n-u\>_{\mathcal{V}}.  $$
 \end{Definition}

\begin{Remark}\label{remark:a.2}
\begin{enumerate}[label=(\roman*), leftmargin=0.7cm]
\item Browder introduced a slightly different definition of  a pseudo-monotone operator in \cite{Bro77}:
An operator $M: \mathcal{V}\longrightarrow \mathcal{V}^*$ is called pseudo-monotone   if
$v_n\rightharpoonup v$ in $\mathcal{V}$ as $n\rightarrow \infty$ and
$$     \limsup_{n\rightarrow\infty} { }_{\mathcal{V}^*}\<M(v_n), v_n-v\>_{\mathcal{V}}\le 0 $$
implies
$$   M(v_n)\rightharpoonup M(v)\ \    \text{and}   \  \
\lim_{n\rightarrow\infty} { }_{\mathcal{V}^*}\<M(v_n), v_n\>_{\mathcal{V}}={ }_{\mathcal{V}^*}\<M(v), v\>_{\mathcal{V}}.  $$
In particular, if $M$ is bounded on bounded sets, then  these two definitions are equivalent, we refer to \cite{LR13,LR15}.
\item We recall that as mentioned before our operator $\mathcal A \colon \mathcal{V} \longrightarrow \mathcal{V}^*$ in \eqref{eq:2.2'} is coercive and bounded as well as monotone if $C_1=0$ in \ref{cond:H2}, hence in particular pseudo-monotone.
If we add a continuous monotone linear operator $\tilde \Lambda \colon \mathcal{V} \longrightarrow \mathcal{H}$ to it, it is easy to see that also $\mathcal A + \tilde \Lambda$ is pseudo-monotone.
\zx \item Despite the fact that, of course, $(-\Lambda,\; D(\Lambda,\mathcal{H}))$ is maximal monotone as an operator on $\mathcal{H}$, the map $-\Lambda\colon \mathcal{F}\subset \mathcal{V}\rightarrow \mathcal{V}^*$ (or $-\Lambda^k\colon \mathcal{F}\subset \mathcal{V}\rightarrow \mathcal{V}^*$), may be not maximal monotone. Hence we cannot apply \cite{TODO} to conclude that $\mathcal{A}-\Lambda\colon \mathcal{F}\subset \mathcal{V}\rightarrow \mathcal{V}^*$ is maximal monotone. Otherwise, because by Theorem \ref{thm:2.1}(ii) $\mathcal{A}-\Lambda$ is coercive, the assertion of Theorem \ref{thm:4.2} would follow easily.
\xz
\end{enumerate}
\end{Remark}

\begin{Lemma}\label{lemma:a.2}
 If $ M: \mathcal{V} \longrightarrow \mathcal{V}^*$ is pseudo-monotone, bounded on bounded sets and coercive, then $M$ is surjective,
  i.e. for any $f\in \mathcal{V}^*$, the equation $M u=f$ has a solution.
\end{Lemma}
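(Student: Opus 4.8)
The plan is to prove surjectivity by the classical Galerkin method, using coercivity for a priori bounds and pseudo-monotonicity (Definition \ref{definition:a.1}) to pass to the limit; this is essentially Br\'ezis' theorem on pseudo-monotone operators, carried out in our reflexive and separable space $\mathcal{V}=L^\alpha([0,\infty);V)\cap L^2([0,\infty);H)$. Fix $f\in\mathcal{V}^*$. Since $\mathcal{V}$ is separable I would choose a total sequence $(e_i)_{i\ge1}$ and set $\mathcal{V}_n:=\mathrm{span}\{e_1,\dots,e_n\}$, so that $\bigcup_n\mathcal{V}_n$ is dense in $\mathcal{V}$.

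First I would solve the finite-dimensional problem on each $\mathcal{V}_n$: find $u_n\in\mathcal{V}_n$ with ${}_{\mathcal{V}^*}\langle M(u_n),v\rangle_{\mathcal{V}}={}_{\mathcal{V}^*}\langle f,v\rangle_{\mathcal{V}}$ for all $v\in\mathcal{V}_n$. Identifying $\mathcal{V}_n\cong\mathbb{R}^n$, this is a root-finding problem $F_n(x)=0$ for the map $F_n(x)_i:={}_{\mathcal{V}^*}\langle M(\sum_j x_j e_j),e_i\rangle_{\mathcal{V}}-{}_{\mathcal{V}^*}\langle f,e_i\rangle_{\mathcal{V}}$. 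Continuity of $F_n$ follows since $M$ is bounded on bounded sets and pseudo-monotone, hence demicontinuous (on a finite-dimensional subspace weak and strong convergence coincide, so the pseudo-monotonicity inequality forces $M(v_k)\rightharpoonup M(v)$ whenever $v_k\to v$). Coercivity, in the sense that ${}_{\mathcal{V}^*}\langle M(v),v\rangle_{\mathcal{V}}/\|v\|_{\mathcal{V}}\to\infty$ as $\|v\|_{\mathcal{V}}\to\infty$, makes $\langle F_n(x),x\rangle\ge0$ on a sufficiently large Euclidean sphere, so the acute-angle variant of Brouwer's fixed point theorem yields a zero $u_n$.

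Next come the a priori estimates: testing with $v=u_n$ gives ${}_{\mathcal{V}^*}\langle M(u_n),u_n\rangle_{\mathcal{V}}={}_{\mathcal{V}^*}\langle f,u_n\rangle_{\mathcal{V}}\le\|f\|_{\mathcal{V}^*}\|u_n\|_{\mathcal{V}}$, so coercivity forces $(u_n)$ to be bounded in $\mathcal{V}$, and then boundedness of $M$ forces $(M(u_n))$ to be bounded in $\mathcal{V}^*$. By reflexivity of $\mathcal{V}$ (hence of $\mathcal{V}^*$) I pass to a subsequence with $u_n\rightharpoonup u$ in $\mathcal{V}$ and $M(u_n)\rightharpoonup\chi$ in $\mathcal{V}^*$. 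Fixing $m$ and letting $n\to\infty$ in the Galerkin identity (valid for all $v\in\mathcal{V}_m$ once $n\ge m$) gives $\langle\chi,v\rangle=\langle f,v\rangle$ on $\bigcup_m\mathcal{V}_m$, hence $\chi=f$ by density.

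The final and most delicate step is to upgrade $u_n\rightharpoonup u$ to the identity $M(u)=f$, and I expect this limit passage to be the main obstacle. The key point is that $u_n\in\mathcal{V}_n$ is admissible as its own test function, so ${}_{\mathcal{V}^*}\langle M(u_n),u_n\rangle_{\mathcal{V}}={}_{\mathcal{V}^*}\langle f,u_n\rangle_{\mathcal{V}}\to\langle f,u\rangle=\langle\chi,u\rangle=\lim_n{}_{\mathcal{V}^*}\langle M(u_n),u\rangle_{\mathcal{V}}$, whence $\limsup_n{}_{\mathcal{V}^*}\langle M(u_n),u_n-u\rangle_{\mathcal{V}}=0\le0$. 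Pseudo-monotonicity then yields, for every $w\in\mathcal{V}$, the inequality ${}_{\mathcal{V}^*}\langle M(u),u-w\rangle_{\mathcal{V}}\le\liminf_n{}_{\mathcal{V}^*}\langle M(u_n),u_n-w\rangle_{\mathcal{V}}={}_{\mathcal{V}^*}\langle f,u-w\rangle_{\mathcal{V}}$. Choosing $w=u-tz$ with $t>0$ and arbitrary $z\in\mathcal{V}$ gives ${}_{\mathcal{V}^*}\langle M(u),z\rangle_{\mathcal{V}}\le{}_{\mathcal{V}^*}\langle f,z\rangle_{\mathcal{V}}$, and replacing $z$ by $-z$ gives the reverse inequality; hence $M(u)=f$. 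The indispensable ingredients are thus the Galerkin structure (which alone secures the $\limsup\le0$ condition) together with the demicontinuity needed to legitimize the Brouwer step.
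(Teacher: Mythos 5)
Your proof is correct, and it is not genuinely different from the paper's: the paper does not prove this lemma at all but simply refers to Br\'ezis \cite{Br68} and \cite[Theorem 27.A]{Z90}, and your Galerkin argument (Brouwer via the acute-angle lemma, coercivity for the a priori bound, weak compactness by reflexivity, and pseudo-monotonicity plus the Minty-type trick for the limit passage) is precisely the standard proof of that cited classical result. The one step worth flagging is the demicontinuity needed to legitimize the Brouwer step, and your justification is the right standard lemma: boundedness on bounded sets together with pseudo-monotonicity implies strong-to-weak sequential continuity, via extracting weakly convergent subsequences of $M(v_k)$ and identifying every weak limit as $M(v)$ through the pseudo-monotonicity inequality.
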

\begin{proof}
This is a classical result due to Br\'{e}zis. For the proof we refer to \cite{Br68} or \cite[Theorem 27.A]{Z90}.
\end{proof}
\begin{proof}[Proof of Theorem \ref{thm:4.2}]
\textbf{Step 1}:  Let  $\alpha>0$  and  consider the Yosida approximation $\Lambda_\alpha \colon \mathcal{V}\longrightarrow \mathcal{V}^*$ defined by
$$ {}_{\mathcal{V}^*}\<\Lambda_\alpha u, \cdot \>_{\mathcal{V}}:= \alpha{}_{\mathcal{V}^*}\<\alpha V_\alpha u -u, \cdot\>_{\mathcal{V}},  $$
where $V_\alpha=(\alpha-\Lambda)^{-1}$, $\alpha > 0$, is the resolvent of $( \Lambda, D(\Lambda, \mathcal H))$ (on $\mathcal H$).

We note that since $\alpha V_\alpha$ is a contraction on $\mathcal{H}$, we have
\begin{align*}
 {}_{V^*} \langle \Lambda_\alpha u, u \rangle_{V} =  \langle \Lambda_\alpha u, u \rangle_{H} \geq 0 \quad \text{for all } u \in V,
\end{align*}
hence by Remark \ref{remark:a.2}(ii) it follows that $\mathcal{A}-\Lambda_\alpha$ is pseudo-monotone, coercive and bounded on bounded sets.
Therefore, by Lemma \ref{lemma:a.2} there exists $u_\alpha\in \mathcal{V}$ such that $\mathcal{A} u_\alpha- \Lambda_\alpha u_\alpha=f$.

\smallskip
\noindent
\textbf{Step 2}: Note that
$$  { }_{\mathcal{V}^*}\<\mathcal{A} u_{\alpha}, u_{\alpha}\>_{\mathcal{V}}\le  { }_{\mathcal{V}^*}\<\mathcal{A} u_{\alpha}-\Lambda_\alpha u_\alpha, u_{\alpha}\>_{\mathcal{V}}= { }_{\mathcal{V}^*}\<f, u_{\alpha}\>_{\mathcal{V}} \le \|f\|_{\mathcal{V}^*} \|u_\alpha\|_{\mathcal{V}}.  $$
Hence, by the coercivity assumption \ref{cond:H3} we obtain that $\sup_{\alpha>0} \|u_\alpha\|_{\mathcal{V}} < \infty$, and hence
\begin{align*}\sup_{\alpha>0} \|\mathcal{A} u_\alpha\|_{\mathcal{V}^*} < \infty\end{align*} by \ref{cond:H4}.

Since for any $v\in \mathcal{V}$
 \begin{equation*}
 \begin{split}
    { }_{\mathcal{V}^*}\<\Lambda_\alpha u_{\alpha}, v\>_{\mathcal{V}}&=-{ }_{\mathcal{V}^*}\<\mathcal{A} u_\alpha-\Lambda_\alpha u_{\alpha}, v\>_{\mathcal{V}} + { }_{\mathcal{V}^*}\<\mathcal{A} u_\alpha, v\>_{\mathcal{V}} \\
    &=-  { }_{\mathcal{V}^*}\<f, v\>_{\mathcal{V}} + { }_{\mathcal{V}^*}\<\mathcal{A} u_\alpha, v\>_{\mathcal{V}} \\
                                         &\le (\|f\|_{\mathcal{V}^*}+\|\mathcal{A} u_\alpha\|_{\mathcal{V}^*})\|v\|_{\mathcal{V}},
 \end{split}
\end{equation*}
we have  $\sup_{\alpha>0} \|\Lambda_\alpha u_\alpha\|_{\mathcal{V}^*} < \infty$.

By the apriori estimates above we know there exists a subsequence $\alpha_n \rightarrow \infty$ such that
\begin{equation*}
 \begin{split}
    u_{\alpha_n} &\rightharpoonup u\  \ \text{in}\  \mathcal{V} ; \\
    \mathcal{A} u_{\alpha_n} &\rightharpoonup h\  \ \text{in}\  \mathcal{V}^* ; \\
    \Lambda_{\alpha_n}  u_{\alpha_n} &\rightharpoonup  g\  \ \text{in}\  \mathcal{V}^*.
 \end{split}
\end{equation*}
So, it is easy to see that $h-g=f$.

By the strong continuity of the dual resolvent $(\hat{V}_\alpha)_{\alpha > \omega}$ in $\mathcal{V}^*$, we have for all $v \in \mathcal{V}^*$
 $$ \lim_{n \rightarrow \infty} {}_{\mathcal{V}^*} \< v, \alpha_n V_{\alpha_n} u_{\alpha_n} \>_{\mathcal{V}} = \lim_{n \rightarrow \infty} {}_{\mathcal{V}^*} \< \alpha_n \hat{V}_{\alpha_n}v, u_{\alpha_n} \>_{\mathcal{V}} = {}_{\mathcal{V}^*}\<v,u\>_{\mathcal{V}},$$
and, therefore,
 $$ \alpha_n V_{\alpha_n}   u_{\alpha_n} \rightharpoonup u\  \ \text{in}\  \mathcal{V} .$$
Since $ \Lambda  \alpha_n V_{\alpha_n}   u_{\alpha_n}=\Lambda_{\alpha_n}  u_{\alpha_n}  $, we also have
 $$ \Lambda  \alpha_n V_{\alpha_n}   u_{\alpha_n} \rightharpoonup  g\  \ \text{in}\  \mathcal{V}^* .$$
 Since $\Lambda$ is linear and $(\Lambda, \mathcal{F})$ is closed as an operator from $\mathcal{V}$ to $\mathcal{V}^*$, this implies that $u \in \mathcal{F}$ and $\Lambda u=g$.

\smallskip
\noindent
\textbf{Step 3}: Now we only need to show $\mathcal{A} u=h$.
Since  $u_{\alpha_n} \rightharpoonup u$ in $\mathcal{V}$ and for all $v \in D(\Lambda, \mathcal{V})$
\begin{align*}
 \limsup_{n \rightarrow \infty} {}_{\mathcal{V}^*}\<\Lambda_{\alpha_n} u_{\alpha_n}, u_{\alpha_n}\>_\mathcal{V}
&=  \limsup_{n \rightarrow \infty} ({}_{\mathcal{V}^*}\<\Lambda_{\alpha_n} u_{\alpha_n}, u_{\alpha_n}-v\>_\mathcal{V} + {}_{\mathcal{V}^*}\<\Lambda_{\alpha_n} u_{\alpha_n},v\>_\mathcal{V}) \\
&\le \limsup_{n \rightarrow \infty} {}_{\mathcal{V}^*}\<\Lambda_{\alpha_n} v, u_{\alpha_n}-v\>_\mathcal{V} + {}_{\mathcal{V}^*}\<\Lambda u, v\>_\mathcal{V} \\
&= {}_{\mathcal{V}^*}\<\Lambda v, u-v\> _\mathcal{V} + {}_{\mathcal{V}^*}\<\Lambda u, v \>_\mathcal{V} ,
\end{align*}
 where the inequality follows from ${ }_{\mathcal{V}^*}\<\Lambda_{\alpha_n} ( u_{\alpha_n}-u ), u_{\alpha_n}-u\>_{\mathcal{V}} \le 0 $, since each $\Lambda_{\alpha_n}$ is a contraction on $\mathcal{H}$. Since $D(\Lambda, \mathcal{V})$ is dense in $(\mathcal{F}, \|\cdot\|_\mathcal{F})$, the above inequality extends to all $v \in \mathcal{F}$. In particular, we may take $v=u$, to obtain that
  $$\limsup_{n \rightarrow \infty} {}_{\mathcal{V}^*}\<\Lambda_{\alpha_n} u_{\alpha_n}, u_{\alpha_n}\>_\mathcal{V}\le {}_{\mathcal{V}^*}\<\Lambda u, u\>_\mathcal{V}.$$ Therefore,
 \begin{equation*}
 \begin{split}
   \limsup_{n\rightarrow\infty} { }_{\mathcal{V}^*}\<\mathcal{A} u_{\alpha_n}, u_{\alpha_n}-u\>_{\mathcal{V}}
  &= \limsup_{n\rightarrow\infty} { }_{\mathcal{V}^*}\<\Lambda_{\alpha_n} u_{\alpha_n}+f, u_{\alpha_n}-u\>_{\mathcal{V}} \\
  &= \limsup_{n\rightarrow\infty} { }_{\mathcal{V}^*}\<\Lambda_{\alpha_n} u_{\alpha_n}, u_{\alpha_n}-u\>_{\mathcal{V}} \\
  &\le { }_{\mathcal{V}^*}\<\Lambda u, u-u\>_{\mathcal{V}}=0 .
 \end{split}
\end{equation*}

So, we have
$$\limsup_{n\rightarrow\infty} { }_{\mathcal{V}^*}\<\mathcal{A} u_{\alpha_n}, u_{\alpha_n}\>_{\mathcal{V}} \le  { }_{\mathcal{V}^*}\< h, u \>_{\mathcal{V}}. $$
Hence, by the pseudo-monotonicity, we have for any $w\in \mathcal{V}$
 \begin{equation*}
 \begin{split}
   { }_{\mathcal{V}^*}\<\mathcal{A} u, u- w\>_{\mathcal{V}}
  &\le  \liminf_{n\rightarrow\infty} { }_{\mathcal{V}^*}\< \mathcal{A} u_{\alpha_n}, u_{\alpha_n}-w\>_{\mathcal{V}} \\
  &\le  \liminf_{n\rightarrow\infty} { }_{\mathcal{V}^*}\<\mathcal{A} u_{\alpha_n}, u_{\alpha_n}\>_{\mathcal{V}}- { }_{\mathcal{V}^*}\< h, w\>_{\mathcal{V}} \\
  &\le  { }_{\mathcal{V}^*}\< h, u-w\>_{\mathcal{V}},
 \end{split}
\end{equation*}
which implies $\mathcal{A} u=h$ since $w \in \mathcal{V}$ was arbitrary.
\end{proof}

\section*{Acknowledgements}
Financial support by the DFG through the CRC 1283 ``Taming uncertainty and profiting from randomness and low regularity in analysis, stochastics and their applications'' is acknowledged.
 W.L. is  supported by NSFC (No.~11822103,11831014,12090011) and the PAPD of Jiangsu Higher Education Institutions,
J.L.S. is supported by project
I$\&$D: UID/MAT/04674/2019.

The second named author would like to thank his hosts at Madeira University for a very pleasant stay in May 2018 and Summer 2019, where a part of this work was done.
 He would also like to thank the Isaac Newton Institute for a very stimulating stay in November 2018, where substantial progress was made on this paper.

\end{document}